\newcommand{\sysn}{\left\{\begin{array}{rcl}}
\newcommand{\sysk}{\end{array}\right.}
\newtheorem{theorem}{Theorem}[section]
\theoremstyle{example}
\newtheorem{example}[theorem]{Example}
\newtheorem{proposition}[theorem]{Proposition}
\theoremstyle{definition}
\newtheorem{definition}[theorem]{Definition}
\newtheorem{corollary}[theorem]{Corollary}
\newcounter{proposcntr}[section]
\newcounter{theoremcntr}[section]
\newcounter{lemmacntr}[section]
\newcommand{\R}{\mathbb{R}}
\newcommand{\N}{\mathbb{N}}
\begin{document}

\begin{frontmatter}

\title{On homeomorphism groups and the set-open topology}

\author{Alexander V. Osipov}

\ead{oab@list.ru}

\address{Krasovskii Institute of Mathematics and Mechanics, \\ Ural Federal
 University, Ural State University of Economics, Yekaterinburg, Russia}

\begin{abstract}

In this paper we focus on the set-open topologies on the group
${\cal H}(X)$ of all self-homeomorphisms of a topological space
$X$ which yield continuity of both the group operations, product
and inverse function. As a consequence, we make the more general
case of Dijkstra's
 theorem. In this case a homogeneously encircling family $\mathcal{B}$ consists of regular open sets and the closure of every set from
$\mathcal{B}$ is contained in the finite union of connected sets
from $\mathcal{B}$. Also we proved that the zero-cozero topology
of $\mathcal{H}(X)$ is the relativisation to $\mathcal{H}(X)$ of
the compact-open topology of $\mathcal{H}(\beta X)$ for any
Tychonoff space $X$ and every homogeneous zero-dimensional space
$X$ can be represented as the quotient space of a topological
group with respect to a closed subgroup.

\begin{keyword} homeomorphism groups \sep topological group \sep set-open
topology \sep evaluation function \sep group topologies

\MSC[2010] 54C35 \sep 54H11 \sep 22A05
\end{keyword}

\end{abstract}

\end{frontmatter}

\section{Introduction}

Let $X$ be a topological space, ${\cal H}(X)$ the group of all
self-homeomorphisms of $X$ and ${\bf e} : (f,x)\in {\cal
H}(X)\times X \rightarrow f(x)\in X$ the evaluation function.
Since ${\cal H}(X)$ is a group with respect to the usual
composition of functions, we consider those set-open topologies on
${\cal H}(X)$ which yield continuity of both the group operations,
product and inverse function, and, at the same time, yield
continuity of the evaluation function ${\bf e}$. In other words,
we will focus on set-open topologies which make ${\cal H}(X)$ as a
topological group and the evaluation function ${\bf e}$ as a group
action of ${\cal H}(X)$ on $X$. Following \cite{1,21}, a topology
on ${\cal H}(X)$ which makes the evaluation function as a
continuous function is called {\it admissible}. Admissible group
topologies on ${\cal H}(X)$ are those ones which determine a group
action of ${\cal H}(X)$ on $X$.

 In \cite{1} R. Arens proved that if a Hausdorff space $X$ is noncompact, locally compact, and locally connected,
 then $\mathcal{H}(X)$ is a topological group because the compact-open topology coincides with the topology
 that $\mathcal{H}(X)$ inherits from
$\mathcal{H}(\alpha X)$, where $\alpha X$ is a Alexandroff
one-point compactification of $X$.

 In \cite{3} J.J. Dijkstra proved that it is enough
 to required the property: any point of $X$ has a connected
 compact neighbourhood.

\begin{theorem}(Theorem in \cite{3})\label{th11} Let $X$ be a noncompact Hausdorff space. If every
point in $X$ has a neighborhood that is a continuum, then the
compact-open topology on $\mathcal{H}(X)$  coincides with the
group topology that $\mathcal{H}(X)$ inherits from
$\mathcal{H}(\alpha X)$.
\end{theorem}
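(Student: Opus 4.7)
The continuum hypothesis on points implies $X$ is locally compact Hausdorff, so $\alpha X$ is compact Hausdorff. Each $f \in \mathcal{H}(X)$ extends uniquely to $\tilde f \in \mathcal{H}(\alpha X)$ by $\tilde f(\infty)=\infty$, and this identifies $\mathcal{H}(X)$ with the stabilizer of $\infty$ in $\mathcal{H}(\alpha X)$. Because the compact-open topology makes $\mathcal{H}(\alpha X)$ into a topological group, the topology inherited by $\mathcal{H}(X)$ is automatically a group topology; the content of the theorem is that it agrees with the compact-open topology on $\mathcal{H}(X)$. The inclusion compact-open $\subseteq$ inherited is immediate because $[K,U]_X=[K,U]_{\alpha X}\cap \mathcal{H}(X)$ whenever $K\subseteq X$ is compact and $U\subseteq X$ is open, so the real work is the opposite inclusion.

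For the converse I inspect a general subbasic set $[K,U]_{\alpha X}\cap \mathcal{H}(X)$ according to whether $\infty\in K$ and whether $\infty\in U$. Three of the four cases either reduce to a standard compact-open subbasic set of $\mathcal{H}(X)$ (possibly with $U$ replaced by $X\setminus C$ for $C$ compact in $X$) or become empty. The only genuinely new case is $K=F\cup\{\infty\}$ with $F\subseteq X$ closed and $U=\alpha X\setminus C$ with $C\subseteq X$ compact, whose intersection with $\mathcal{H}(X)$ is
\[
A(F,C)=\{f\in\mathcal{H}(X):f(F)\cap C=\emptyset\}.
\]
So the theorem reduces to showing that every such $A(F,C)$ is open in the compact-open topology of $\mathcal{H}(X)$.

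Fix $f_0\in A(F,C)$. Since $C$ is compact and contained in the open set $X\setminus f_0(F)$, the continuum hypothesis (together with local compactness) yields finitely many continuum neighborhoods $N_1,\dots,N_m$ of points $c_j\in C$ with $N_j\subseteq X\setminus f_0(F)$ and $C\subseteq\bigcup_j\mathrm{int}(N_j)$. Set $p_j=f_0^{-1}(c_j)$; then each $f_0^{-1}(N_j)$ is a continuum in $X\setminus F$, and by local compactness I pick open sets $V_j\supseteq f_0^{-1}(N_j)$ with $\overline{V_j}$ compact and $\overline{V_j}\cap F=\emptyset$. In particular $\partial V_j$ is compact and $f_0(\partial V_j)\cap N_j=\emptyset$. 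Consider the compact-open neighborhood of $f_0$ given by
\[
\mathcal{W}=\bigcap_{j=1}^{m}[\partial V_j,\,X\setminus N_j]\;\cap\;\bigcap_{j=1}^{m}[\{p_j\},\,\mathrm{int}(N_j)].
\]
For $f\in\mathcal{W}$ the boundary conditions give $f^{-1}(N_j)\subseteq V_j\sqcup(X\setminus\overline{V_j})$, a disjoint union of open sets; the pointwise conditions give $p_j\in f^{-1}(N_j)\cap V_j$; and since $N_j$ is a continuum and $f$ is a homeomorphism, $f^{-1}(N_j)$ is connected, so it must lie entirely in $V_j$. Hence $f^{-1}(C)\subseteq\bigcup_j V_j\subseteq X\setminus F$, i.e.\ $f\in A(F,C)$.

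The main obstacle is conceptual: the compact-open topology naturally controls images of compact sets, not preimages of the compact set $C$. The continuum hypothesis is precisely what bridges this gap: the connectedness of each $N_j$ forces $f^{-1}(N_j)$ to be connected, so the boundary condition confines it to one side of $\partial V_j$ and a single witness point $p_j$ selects the correct side. Verifying that suitable continuum neighborhoods $N_j$ can be chosen inside $X\setminus f_0(F)$ is the technical subtlety that must be handled carefully using local compactness, but once that is in place the rest of the argument is a clean compact-open computation.
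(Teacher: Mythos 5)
Your reduction is correct and efficient: identifying $\mathcal{H}(X)$ with the stabilizer of $\infty$, disposing of three of the four types of subbasic sets, and isolating the sets $A(F,C)=\{f\colon f(F)\cap C=\emptyset\}$ as the only genuinely new opens is exactly the right setup, and your final connectedness argument (a connected preimage that meets $V_j$ but misses $\partial V_j$ must lie in $V_j$) is sound. The gap is the step you yourself flag as a ``technical subtlety'': the claim that each $c_j\in C$ admits a continuum neighborhood $N_j$ contained in the prescribed open set $X\setminus f_0(F)$. The hypothesis gives each point \emph{one} continuum neighborhood, not arbitrarily small ones, and it does not imply local connectedness. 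For example, if $T$ is the closed topologist's sine curve and $X=T\times\mathbb{R}$, then every point of $X$ has a continuum neighborhood, yet $p=((0,0),0)$ has no continuum neighborhood inside a small ball: any subcontinuum of $T$ that is a neighborhood of $(0,0)$ must contain a full tail of the sine curve and hence points at distance $\ge 1$ from the origin. Local compactness produces small \emph{compact} neighborhoods, not small \emph{connected} ones, so this step cannot be repaired as stated; and everything downstream (the choice of $V_j$ with $\overline{V_j}\cap F=\emptyset$, hence the conclusion $f^{-1}(C)\subseteq\bigcup_j V_j\subseteq X\setminus F$) depends on it.

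The standard way around this --- Dijkstra's own argument, and in essence the mechanism of the proof of Theorem~\ref{TopoGroupCondition} in this paper --- is to let the continua be large. Cover $C$ by continuum neighborhoods $N_j$ with no containment requirement, pass to the finitely many components $D_1,\dots,D_k$ of $\bigcup_j N_j$ (each a continuum with nonempty interior meeting $C$), choose pairwise disjoint open sets $V_l\supseteq f_0^{-1}(D_l)$ with compact closure, and run your boundary-plus-witness-point argument to get $f^{-1}(C)\subseteq\bigcup_l V_l$ for every $f$ in the resulting compact-open neighborhood. This does \emph{not} yet give $f(F)\cap C=\emptyset$, since $\bigcup_l V_l$ may meet $F$; but it shows $f^{-1}(C)\cap F\subseteq F\cap\bigcup_l\overline{V_l}$, which is \emph{compact}, so one additional ordinary compact-open condition $[F\cap\bigcup_l\overline{V_l},\,X\setminus C]$ (satisfied by $f_0$) finishes the proof. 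Your write-up is missing exactly this last mechanism, and it is the heart of the theorem. Note also that the paper does not prove Theorem~\ref{th11} where it is stated --- it quotes it from \cite{3} and later rederives it as a corollary of Theorem~\ref{TopoGroupCondition2} applied to the family of open sets with compact closure --- so your direct route is closer to Dijkstra's original proof than to the paper's own derivation.
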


 A. Di Concilio emphasis that local
 compactness of $X$ is equivalent to the family of compact sets of
 $X$ being a {\it boundedness} of $X$, \cite{hu}, which, jointly any
 EF-proximity of $X$, gives a local proximity space \cite{le}. As a
 consequence, A.Di Concilio make this particular case to fall
 within the more general one in which compact sets are substituted
 with bounded sets in a local proximity space, while the property
 :
 {\it any point of $X$ has a compact connected neighbourhood} is
 replaced by the following one: (*) for each non empty bounded set $B$
 there exists a finite number of connected bounded sets $B_1$,...,
 $B_n$ such that $B<<_{\delta} int(B_1)\cup ... \cup int(B_n)$.

\begin{theorem}\label{th12}(Theorem 5.1 in \cite{adc}) If
$(X,\mathcal{B},\delta)$ is a local proximity space with the
property (*) and any homeomorphism of $X$ preserves both
boundedness and proximity, then the topology of uniform
convergence on bounded sets derived from the unique totally
bounded uniformity associated with $\delta$ is an admissible group
topology on $\mathcal{H}(X)$.

\end{theorem}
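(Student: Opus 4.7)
The plan is to exploit the Smirnov compactification $sX$ associated with the EF-proximity $\delta$: by Smirnov's theorem the totally bounded uniformity $\mathcal{U}_\delta$ on $X$ is precisely the restriction to $X$ of the unique compatible uniformity on the compact Hausdorff space $sX$. A subbase of neighbourhoods of $f_0 \in \mathcal{H}(X)$ in the topology $\tau_\mathcal{B}$ of uniform convergence on bounded sets consists of the sets
\[
W(f_0; B, V) = \{f \in \mathcal{H}(X) : (f(x), f_0(x)) \in V \text{ for every } x \in B\},
\]
with $B \in \mathcal{B}$ and $V \in \mathcal{U}_\delta$. By hypothesis every $f \in \mathcal{H}(X)$ and its inverse preserve both $\delta$ and $\mathcal{B}$, hence are uniformly continuous with respect to $\mathcal{U}_\delta$ and extend to a homeomorphism $\bar f$ of $sX$; this embedding $\mathcal{H}(X) \hookrightarrow \mathcal{H}(sX)$ lets one transport uniform arguments from the compact setting. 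Three items need verification: continuity of the evaluation $\mathbf{e}$, continuity of composition, and continuity of inversion.

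Admissibility I would address first. Given $f_0 \in \mathcal{H}(X)$, $x_0 \in X$, and an open neighbourhood $U$ of $f_0(x_0)$, apply property (*) to the bounded singleton $\{x_0\}$ to obtain a connected bounded set $B_0$ whose interior contains $x_0$ and with $f_0(B_0)$ $\delta$-subordinate to $U$; an entourage $V$ small enough that the $V$-neighbourhood of $f_0(B_0)$ remains inside $U$ then ensures that $f \in W(f_0; B_0, V)$ together with $x \in \mathrm{int}(B_0)$ force $f(x) \in U$. Continuity of composition at $(f_0, g_0)$ follows the same blueprint: given $W(f_0 g_0; B, V)$, pass to the bounded set $g_0(B)$, cover it by finitely many connected bounded sets via (*), pick $V_1$ with $V_1 \circ V_1 \subseteq V$, and split $f(g(x))$ through $f_0(g_0(x))$ using uniform continuity of $\bar f_0$ on the compact $sX$-closure of that covering.

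The hard part will be continuity of inversion, and here the connectedness in (*) is the decisive ingredient, just as it is in Dijkstra's Theorem~\ref{th11}. Given $W(f_0^{-1}; B, V)$, cover $f_0^{-1}(B)$ by connected bounded sets $C_1,\ldots,C_n$ supplied by (*), set $D = C_1 \cup \cdots \cup C_n$, and argue that if $f$ is sufficiently close to $f_0$ on $f_0(D)$ then the composite $g = f^{-1} \circ f_0$ remains close to the identity on $D$: the connectedness of each $C_i$ prevents $g$ from jumping to a distant component, confining $g(D)$ inside a fixed bounded enlargement of $D$. Uniform continuity of $\bar f_0^{-1}$ on the compact space $sX$ then converts the closeness of $f \circ g$ and $f_0$ on this enlargement into the desired closeness of $f^{-1}$ and $f_0^{-1}$ on $B$. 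A finite combination over $i = 1,\ldots,n$ closes the argument, and without the connectedness clause in (*) nothing would prevent $f^{-1}$ from moving points of $B$ across components of $f_0^{-1}(B)$, exactly the obstruction that makes plain compact-open inversion discontinuous.
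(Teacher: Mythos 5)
First, a caveat: the paper offers no proof of Theorem~\ref{th12} --- it is quoted from Di Concilio (Theorem~5.1 of \cite{adc}) purely as motivation --- so there is no in-paper argument to compare yours against line by line. Judged on its own, your outline takes the expected and essentially correct route, and it is the same mechanism the paper itself uses when it proves its generalization, Theorem~\ref{TopoGroupCondition}: extend boundedness- and proximity-preserving homeomorphisms to a compactification, harvest uniform continuity from compactness to handle evaluation and composition, and use the connectedness built into (*) to control inverses. One refinement worth making: the natural extension here is Leader's \emph{local} compactification $\ell X$ rather than the full Smirnov compactification $sX$. In $\ell X$ the bounded sets are exactly those with compact closure and (*) becomes Dijkstra's hypothesis that every compact set has a neighbourhood which is a finite union of continua, so the theorem reduces cleanly to Theorem~\ref{th11}; if you insist on $sX$ you must still verify that uniform convergence on bounded sets is the trace of uniform convergence on the family $\{\overline{B}^{sX}:B\in\mathcal{B}\}$ and \emph{not} of the full compact-open topology of $\mathcal{H}(sX)$, which is finer in general, so you cannot simply ``transport uniform arguments from the compact setting.''

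The genuine gap is in the inversion step, which is the entire difficulty of the theorem: the sentence ``the connectedness of each $C_i$ prevents $g=f^{-1}\circ f_0$ from jumping to a distant component, confining $g(D)$ inside a fixed bounded enlargement of $D$'' asserts the conclusion rather than proving it. To make it work you need, exactly as in the paper's proof of Theorem~\ref{TopoGroupCondition}, an explicit annulus condition: choose bounded sets $D'$, $D''$ with $\overline{D}\subset D'\subset\overline{D'}\subset D''$, and include among the defining conditions on $f$ both $f(C_i)\subset f_0(C_i)$ (or a small connected enlargement thereof, obtained by another application of (*) and a sufficiently fine entourage) and $f(\overline{D'}\setminus D')\cap f_0(\overline{D})=\emptyset$. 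Then if some $y\in B$, $y=f_0(z)$ with $z\in C_i$, had $f^{-1}(y)\notin D''$, the connected set $f^{-1}(f_0(C_i))$ would contain both $C_i$ and $f^{-1}(y)$, hence would meet $\overline{D'}\setminus D'$, forcing $f(\overline{D'}\setminus D')$ to meet $f_0(C_i)$ --- the contradiction. Without this boundary-crossing argument the confinement claim is unsupported, and it is precisely the point where a proof could fail; everything after it (uniform continuity of $\bar f_0^{-1}$ converting closeness of $f\circ g$ and $f_0$ into closeness of $f^{-1}$ and $f_0^{-1}$ on $B$) is fine.
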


 Whenever $(X,\mathcal{B},\delta)$ is a local proximity space,
 then the subcollection of $\mathcal{B}$ of all closed bounded
 subsets of $X$ is a closed, hereditarily closed network of $X$.

We consider the family of $\mathcal{B}$-open topologies which make
${\cal H}(X)$ as a topological group, but the family $\mathcal{B}$
is not necessarily a hereditarily closed network of $X$. As a
 consequence, we make the more general case of Dijkstra's
 Theorem \ref{th11}. In this case a homogeneously encircling family $\mathcal{B}$ consists of regular open sets and the closure of every set from
$\mathcal{B}$ is contained in the finite union of connected sets
from $\mathcal{B}$.

\section{Set-open topology}

The set-open topology is a generalization of the compact-open
topology and the topology of pointwise convergence. This topology
was first introduced by Arens and Dugundji \cite{21}. Now the
set-open topology effective is used in investigation of the
topological properties of function spaces and the groups of all
self-homeomorphisms of topological spaces
\cite{osi4,22,2,25,osi3,osi5,osi6,osi7,osi1,osi2}.

\begin{definition} Let $\mathcal{B}$ be a nonempty family of subsets of a topological space $X$.
 The \textit{$\mathcal{B}$-open topology} on ${\cal
H}(X)$ is a topology generated by the sets of the form $[U,V] =
\{h \in {\cal H}(X) \colon h(U) \subset V\}$ where $U \in
\mathcal{B}$ and $V$ is open. The
\textit{$\overline{\mathcal{B}}$-open topology} is a topology
generated by the sets of the form $[\overline{U},V]$.
\end{definition}

We will denote as ${\cal H}_{\mathcal{B}}(X)$ and ${\cal
H}_{\overline{\mathcal{B}}}(X)$ the set of all homeomorphisms of
$X$ equipped with the $\mathcal{B}$-open topology and
$\overline{\mathcal{B}}$-open topology respectively. In
particular, if $\mathcal{B}$ is a family of all compact subsets of
$X$, then ${\cal H}_c(X)$ is the set of all homeomorphisms of $X$
equipped with the compact-open topology.

Let $\mathcal{B}'$ be a minimal family containing $\mathcal{B}$
and closed under finite unions. Then
$\mathcal{H_{\mathcal{B}'}}(X)$ is homeomorphic to
$\mathcal{H}_{\mathcal{B}}(X)$ and
$\mathcal{H}_{\overline{\mathcal{B}'}}(X)$ is homeomorphic to
$\mathcal{H}_{\overline{\mathcal{B}}}(X)$ so we can always
consider the family $\mathcal{B}$ be closed under finite unions.
It is straightforward that the sets $[U,h(U)]$ with $U\in
\mathcal{B}$ and $h\in \mathcal{H}(X)$ constitute the subbase of
$\mathcal{H_{\mathcal{B}}}(X)$.

 Note that the evaluation function ${\bf e} : (f,x)\in {\cal
H}(X)\times X \rightarrow f(x)\in X$ is continuous in a
$\mathcal{B}$-open topology when $\mathcal{B}$ is a base and it is
continuous in a $\overline{\mathcal{B}}$-open topology when
$\mathcal{B}$ is a base and $X$ is a regular space.

\begin{definition} We will call $\mathcal{B}$ a \textit{homogeneously
encircling family} in $X$ when $\mathcal{B}$ is a family of open
in $X$ sets with following properties.

(a) $h(V) \in \mathcal{B}$ for every $V \in \mathcal{B}$ and every
homeomorphism $h \in \mathcal{H}(X)$;

(b) for every $V, W \in \mathcal{B}$ inclusion $\overline{V}
\subset W$ implies $W\setminus \overline{V} \in \mathcal{B}$;

(c) for every $B\in \mathcal{B}$ there is  $B'\in \mathcal{B}$
such that $\overline{B} \subset B'$.

\end{definition}

 Obviously that if a family $\mathcal{B}$ satisfying condition (a) then
$\mathcal{H_{\mathcal{B}}}(X)$ is a \textit{paratopological
group}, i.e. composition of homeomorphisms is a continuous
operation.

Space $\mathcal{H}_{\overline{B}}(X)$ is a paratopological group
when $\mathcal{B}$ is a homogeneously encircling family satisfying
the next condition:

(d) for every $B,B'\in \mathcal{B}$ such that $\overline{B}\subset
B'$ there is $B''\in \mathcal{B}$ such that $\overline{B}\subset
B''\subset \overline{B''}\subset B'$.

\medskip

\begin{definition} We will call $\mathcal{B}$ \textit{Urysohn
homogeneously encircling family} in $X$ when $\mathcal{B}$ is a
family of open sets satisfying conditions (a), (b), (c) and (d).
\end{definition}

\medskip

It is well known that if $X$ is a compact Hausdorff space, then
$\mathcal{H}(X)$ with the compact-open topology is a topological
group. In general, the compact-open topology does not provide
continuity of the inverse function. For non-compact case, we have
the Theorem  \ref{th11} of Dijkstra for the family $\mathcal{B}$
of all compact subsets of $X$ and the Theorem \ref{th12} of Di
Consilio (a generalization of Dijkstra's theorem) for the family
$\mathcal{B}$ of all closed bounded
 subsets of $X$ where $\mathcal{B}$ is a closed, hereditarily closed network of $X$.

\medskip
Note that there exists a homogeneously encircling family
$\mathcal{B}$ in $X$ such that the $\mathcal{B}$-open topology on
$\mathcal{H}(X)$ is finer than the compact-open topology.

\medskip

\textbf{Example. }Let $X = \{0\} \cup \{\pm 1/n:$ $n \in
\mathbb{N}\}$ with usual topology of real line and let
${\mathcal{B}}$ be the family of all open subsets of $X$. We state
that $\mathcal{H}_{\mathcal{B}}(X) > \mathcal{H}_c(X)$.

Let $V = \{1/n \colon n\in \N\}$ and  ${\bf id}$ is a identical
map. Suppose that ${\bf id} \in \bigcap_{i=1}^p [K_i, U_i] \subset
[V,V]$ for some compact sets $K_i$ and open sets $U_i$. Let number
$j$ be so that $0\in K_i$ for $i<j$ and $0\notin K_i$ for
$i\geqslant j$. There exists number $r\in \N$ such that for every
$n > r$ $\pm 1/n \in U_i$ for $i<j$ and $\pm 1/n \notin K_i$ for
$i\geqslant j$. Let $h$ be a mapping defined as follows: $h(x) =
x$ for any $x \geqslant 1/r$ and $h(x) = -x$ otherwise. It is easy
to see that $h$ is a homeomorphism and $h\in
\bigcap_{i=1}^p[K_i,U_i]$ but $h\notin [V,V]$.

\medskip

A subset $W$ of $X$ is called a regular open set provided
$W=Int_X(cl_X(W))$.

\begin{theorem}\label{TopoGroupCondition} Let homogeneously encircling family $\mathcal{B}$ consists of regular open sets and the closure of every set from
$\mathcal{B}$ is contained in the finite union of connected sets
from $\mathcal{B}$. Then $\mathcal{H}_{\mathcal{B}}(X)$ is a
topological group.
\end{theorem}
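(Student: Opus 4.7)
The plan is to show that inversion $g\mapsto g^{-1}$ is continuous on $\mathcal{H}_{\mathcal{B}}(X)$; condition (a) already gives joint continuity of composition, so $\mathcal{H}_{\mathcal{B}}(X)$ is a paratopological group, and therefore left and right translations are homeomorphisms. Consequently it suffices to prove continuity of inversion at the identity $\mathbf{id}$. I fix a subbasic neighborhood $[U,U]$ of $\mathbf{id}$ (with $U\in\mathcal{B}$) and aim to produce a neighborhood $W$ of $\mathbf{id}$ in the $\mathcal{B}$-open topology such that every $g\in W$ satisfies $g^{-1}\in[U,U]$, equivalently $U\subseteq g(U)$.

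To build $W$ I extract auxiliary sets from the hypotheses. Condition (c) produces $U_1\in\mathcal{B}$ with $\overline{U}\subseteq U_1$; condition (b) then places the collar $U_1\setminus\overline{U}$ in $\mathcal{B}$. The connected-cover assumption yields connected $C_1,\ldots,C_n\in\mathcal{B}$ with $\overline{U_1}\subseteq\bigcup_i C_i$, and a further application of (c) provides $C_i'\in\mathcal{B}$ with $\overline{C_i}\subseteq C_i'$. I then set
\[
W=[U,U_1]\cap[U_1\setminus\overline{U},\,U_1\setminus\overline{U}]\cap\bigcap_{i=1}^{n}[C_i,C_i'],
\]
which is an open neighborhood of $\mathbf{id}$ in the $\mathcal{B}$-open topology (each factor is subbasic and $\mathbf{id}$ belongs to each).

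The main step is to verify $U\subseteq g(U)$ whenever $g\in W$. Arguing by contradiction, pick $y\in U$ with $y\notin g(U)$ and set $x=g^{-1}(y)$. The factor $[U_1\setminus\overline{U},\,U_1\setminus\overline{U}]$ combined with $y\in U\subseteq\overline{U}$ excludes $x\in U_1\setminus\overline{U}$; and $x\in U$ is excluded by hypothesis, leaving $x\in\partial U\cup(X\setminus U_1)$. Eliminating these two remaining cases will use all three ingredients in a coordinated way: the regular-open hypothesis, which via $U=\mathrm{Int}_X(\cl_X(U))$ lets one recover membership in $g(U)$ from membership in $\overline{g(U)}$; the connectedness of each $C_i$, which forces $g(C_i)$ to be a connected subset of $C_i'$ and hence, when $\partial U$ separates the ambient region, to lie on a single side of $\partial U$; and the collar inclusion $g(U_1\setminus\overline{U})\subseteq U_1\setminus\overline{U}$, which confines the images of the parts of $C_i$ sitting in $\overline{U}$ to $\overline{U}$ itself and so obstructs $g(C_i)$ from crossing $\partial U$ in the wrong direction.

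The main obstacle will be this final coordination: converting the intuitive picture into a rigorous contradiction with the finitely many constraints that define $W$, in particular combining the connectedness of each $C_i$ with the regular-open property so that an ``interior hole'' inside $U$ (a point of $U$ missed by $g(U)$) is detected by one of the $[C_i,C_i']$-conditions or by the collar condition. Once continuity of inversion at $\mathbf{id}$ is established, continuity at an arbitrary $h\in\mathcal{H}(X)$ follows from the paratopological group structure via the translation homeomorphisms $g\mapsto hg$ and $g\mapsto gh^{-1}$, finishing the proof that $\mathcal{H}_{\mathcal{B}}(X)$ is a topological group.
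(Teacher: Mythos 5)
Your reduction to continuity of inversion at the identity is sound (the paper works at an arbitrary $h$ directly, but your translation argument is a legitimate simplification), and the target $U\subseteq g(U)$ is the right one. However, the proof has a genuine gap at exactly the point you flag as ``the main obstacle'': the neighborhood $W$ you construct is not strong enough to carry out the connectedness argument, and the sketch you give for the case $x\notin U_1$ does not work. Two ingredients are missing. First, you need the factors $[C_i,C_i]$ rather than $[C_i,C_i']$: the point of connectedness is that $g(C_i)\subseteq C_i$ forces $g^{-1}(C_i)$ to be a \emph{connected} set containing both $C_i$ and any $x$ with $g(x)\in C_i$; the weaker condition $g(C_i)\subseteq C_i'$ only gives $C_i\subseteq g^{-1}(C_i')$, and $g^{-1}(C_i')$ need not be connected. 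Second, and more importantly, you need an outer annulus: with $C=\bigcup_i C_i\supseteq\overline{U}$, take $C',C''\in\mathcal{B}$ with $\overline{C}\subseteq C'\subseteq\overline{C'}\subseteq C''$ (condition (c) twice) and add the factor $[C''\setminus\overline{C},\,C''\setminus\overline{C}]$ (which lies in $\mathcal{B}$ by (b)). Then if $x\notin C''$ and $g(x)\in C_i\cap U$, the connected set $g^{-1}(C_i)$ meets both $C'$ and $X\setminus\overline{C'}$, hence meets $\overline{C'}\setminus C'\subseteq C''\setminus\overline{C}$, and $g$ of such a point lies simultaneously in $C_i\subseteq C$ and in $C''\setminus\overline{C}$ --- a contradiction. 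Your $W$ contains no factor controlling $g$ on any region surrounding $\bigcup_i C_i$, so no contradiction of this kind is available; and the heuristics you offer instead (``$\partial U$ separates the ambient region''; the collar condition ``confines the images of the parts of $C_i$ sitting in $\overline{U}$'') are false in general: $\partial U$ need not separate $X$, and $g(U_1\setminus\overline{U})\subseteq U_1\setminus\overline{U}$ says nothing about $g$ on $C_i\cap\overline{U}$.

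There is also a smaller misreading of how regular openness enters. You propose to ``eliminate'' the case $x\in\partial U$ alongside the others, but that case is not eliminated pointwise. The correct endgame (as in the paper) is: having shown $g(x)\notin U$ for all $x\notin\overline{U}$ and $g(U)\subseteq U$, conclude $U\subseteq g(\overline{U})=\overline{g(U)}\subseteq\overline{U}$, hence $\overline{g(U)}=\overline{U}$, and only then invoke regular openness of $U$ (and of its homeomorphic image $g(U)$) to pass from equality of closures to $g(U)=U$. Without this closing step the boundary case cannot be resolved.
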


\textit{Proof. }It suffices to prove that inversion $h \mapsto
h^{-1}$ is a continuous map. Let $h^{-1} \in [U,h^{-1}(U)]$ where
$U \in \mathcal{B}$. We will find an open neighborhood $V$ of $h$
such that $V^{-1} \subset [U,h^{-1}(U)]$, i.e. for every $g\in V$
and every $x\notin U$ $g(x) \notin h(U)$. First, choose an open
set $C=\bigcup_a C_a \supset \overline{U}$ from family
$\mathcal{B}$ where $\{C_a\}$ is a finite set of disjoint
components. We can find in $\mathcal{B}$ sets $C'$ and $C''$ such
that $\overline{C} \subset C' \subset \overline{C'} \subset C''$.
Let $V = [U,h(U)] \cap [C' \setminus \overline{U}, h(C'\setminus
\overline{U})] \cap \bigcap_a [C_a,h(C_a)] \cap [C''\setminus
\overline{C}, h(C''\setminus \overline{C})]$ and $g\in V$. If
$g(x) \in h(C_a \cap U)$ for some $x\notin C''$ then $g(x)$ is
contained in connected set $h(C_a)$ with $g(C_a)$ because $g\in
V\subset [C_a,h(C_a)]$. Hence $g^{-1}(h(C_a))$ is a connected set
that contains $x$ and $C_a$ which implies that $g^{-1}(h(C_a))
\cap (\overline{C'}\setminus C') \neq \emptyset$. But this fact
means that $g(\overline{C'}\setminus C') \cap h(C_a) \neq
\emptyset$. It is contradiction due to the fact that $g \in V
\subset [C''\setminus \overline{C}, h(C''\setminus \overline{C})]$
and $h(C_a) \subset h(C)$.
 Obviously $g(x) \notin h(U)$ for every $x\in C''\setminus \overline{U}$. We conclude that $g(\overline{U}) = h(\overline{U})$ or
 equally $\overline{g(U)} = \overline{h(U)}$. The latter implies $g(U) = h(U)$ because $U$ is a regular open set.\qed

\begin{theorem}\label{TopoGroupCondition2} Let Urysohn homogeneously encircling
family $\mathcal{B}$ be such that the closure of every set from
$\mathcal{B}$ is contained in the finite union of connected sets
from $\mathcal{B}$. Then $\mathcal{H}_{\overline{\mathcal{B}}}(X)$
is a topological group.
\end{theorem}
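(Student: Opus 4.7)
\textit{Proof plan.} The plan is to mimic the proof of Theorem \ref{TopoGroupCondition}, using the Urysohn condition (d) to supply the interpolating members of $\mathcal{B}$ demanded by the fact that the $\overline{\mathcal{B}}$-open subbase uses closures $[\overline{A},W]$ instead of $[A,W]$.

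Since $\mathcal{H}_{\overline{\mathcal{B}}}(X)$ is already a paratopological group (observed right before the definition of Urysohn homogeneously encircling family), only continuity of inversion at an arbitrary $h\in\mathcal{H}(X)$ remains. I would fix a subbasic neighbourhood $[\overline{A},W]$ of $h^{-1}$ with $A\in\mathcal{B}$, $W$ open, and $h^{-1}(\overline{A})\subset W$. Writing $A=h(U)$ for $U=h^{-1}(A)\in\mathcal{B}$ (by (a)), this becomes $\overline{U}\subset W$; by (c) and (d) I would interpolate $U^\dagger\in\mathcal{B}$ with $\overline{U}\subset U^\dagger\subset\overline{U^\dagger}\subset W$, so the task reduces to finding a $\overline{\mathcal{B}}$-open neighbourhood $V_h$ of $h$ such that $g^{-1}(h(\overline{U}))\subset U^\dagger$ for every $g\in V_h$.

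By the hypothesis on closures, pick connected $C_a\in\mathcal{B}$ with $\overline{U^\dagger}\subset C:=\bigcup_a C_a\in\mathcal{B}$, and iterate (c), (d) to build the tower
\[\overline{U}\subset U^\dagger\subset\overline{U^\dagger}\subset C\subset\overline{C}\subset C^\ast\subset\overline{C^\ast}\subset C^{\ast\ast}\subset\overline{C^{\ast\ast}},\]
with all listed open sets in $\mathcal{B}$. Condition (b) yields the shells $C^\ast\setminus\overline{U^\dagger}$ and $C^{\ast\ast}\setminus\overline{C}$ in $\mathcal{B}$, and one more application of (d) furnishes, for each of $U$, $C_a$, $C^\ast\setminus\overline{U^\dagger}$, $C^{\ast\ast}\setminus\overline{C}$, a slightly fattened mate (marked with a prime) in $\mathcal{B}$ containing its closure yet still sitting inside the next tier of the tower. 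Set
\[V_h:=[\overline{U},h(U')]\cap[\overline{C^\ast\setminus\overline{U^\dagger}},h((C^\ast\setminus\overline{U^\dagger})')]\cap\bigcap_a[\overline{C_a},h(C_a')]\cap[\overline{C^{\ast\ast}\setminus\overline{C}},h((C^{\ast\ast}\setminus\overline{C})')].\]
Each factor is a legitimate $\overline{\mathcal{B}}$-open subbasic set containing $h$ (because $\overline{B}\subset B'$ forces $h(\overline{B})\subset h(B')$), so $V_h$ is an open neighbourhood of $h$. For $g\in V_h$ the connectedness argument from Theorem \ref{TopoGroupCondition} then applies with only cosmetic changes: if some $x\notin\overline{C^{\ast\ast}}$ satisfied $g(x)\in h(C_a)$ for some $a$, then $g^{-1}(h(C_a'))$, being the homeomorphic image of a connected set and containing both $\overline{C_a}$ and $x$, would meet the boundary of $\overline{C^{\ast\ast}}$ and hence the shell $C^{\ast\ast}\setminus\overline{C}$, contradicting the last factor of $V_h$ once the primes are chosen so that the relevant $h$-images are disjoint. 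Putting the four factors together delivers $g^{-1}(h(\overline{U}))\subset U^\dagger$, as required.

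The principal obstacle is the bookkeeping for the primes: each enlargement must, by (d), contain the closure of its original and yet remain small enough to be separated from the $h$-images of the neighbouring tiers of the tower, so that the connectedness contradiction closes cleanly. Condition (d) was introduced precisely to allow such nested interpolations, so once the tower and primes are chosen consistently, the topological core of the argument is the same as in Theorem \ref{TopoGroupCondition}.
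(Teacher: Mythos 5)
Your overall strategy is exactly what the paper intends (its own proof of this theorem is literally ``the same as in Theorem~\ref{TopoGroupCondition}''), and most of your adaptation is sound: reducing a subbasic neighbourhood $[\overline{A},W]$ of $h^{-1}$ to the statement $g^{-1}(h(\overline{U}))\subset W$ for $U=h^{-1}(A)$, interpolating via (c) and (d), and replacing each factor $[B,h(B)]$ of the original proof by $[\overline{B},h(B')]$ with $B'$ an open fattening of $\overline{B}$. You also correctly notice that aiming at a slightly enlarged target inside $W$ removes the need for the regular-openness hypothesis of Theorem~\ref{TopoGroupCondition}.

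There is, however, one step that is not cosmetic and where your argument genuinely breaks: the connectedness of $C_a'$. Condition (d) produces an interpolating set $B''$ with $\overline{C_a}\subset B''\subset\overline{B''}\subset B'$, but nothing in (a)--(d) forces $B''$ to be connected, while your contradiction explicitly uses that $g^{-1}(h(C_a'))$ is ``the homeomorphic image of a connected set.'' If $C_a'$ is disconnected, the set $g^{-1}(h(C_a'))$ containing $\overline{C_a}$ and $x$ need not be connected, need not meet the separating shell, and the contradiction does not close. The repair is to obtain the connected enlargement from the covering hypothesis rather than from (d): apply the hypothesis to $C_a$ itself, cover $\overline{C_a}$ by finitely many connected members of $\mathcal{B}$, and let $C_a'$ be the union of those members that meet $\overline{C_a}$. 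Each such member united with the connected set $\overline{C_a}$ is connected, all of these unions share $\overline{C_a}$, and they cover $\overline{C_a}$, so $C_a'$ is a connected member of $\mathcal{B}$ (recall $\mathcal{B}$ may be taken closed under finite unions) containing $\overline{C_a}$; the outer tower $C^{\ast}, C^{\ast\ast}$ must then be built around $\bigcup_a C_a'$. A second, smaller slip: your shells cover $X\setminus\overline{U^\dagger}$ but can miss the boundary, since a point of $\overline{U^\dagger}\setminus U^\dagger$ need not lie in $\overline{C^{\ast}\setminus\overline{U^\dagger}}$; so the conclusion you can actually extract is $g^{-1}(h(\overline{U}))\subset\overline{U^\dagger}$ rather than $\subset U^\dagger$ --- which is still sufficient, because $\overline{U^\dagger}\subset W$.
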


\textit{Proof. }The proof is the same as in the
Theorem~\ref{TopoGroupCondition}. \qed

\begin{example} Let $Z= \mathbb{N}\times [0,1]=\bigcup \{\mathbb{I}_n : n\in \mathbb{N}\}$, $X=\alpha Z=Z\cup
\{\alpha\}$ where  $\alpha Z$ is a Alexandroff one-point
compactification of $Z$ and ${\mathcal{B}}$ is the family of all
finite unions of intervals  $\mathbb{I}_n=[0,1]_n$.

Since $h(\alpha)=\alpha$ for any $h\in \mathcal{H}(X)$, then
${\mathcal{B}}$ is a homogeneously encircling family consists of
regular open sets and the closure of every set from $\mathcal{B}$
is contained in the finite union of connected sets from
$\mathcal{B}$. By Theorem \ref{TopoGroupCondition},
$\mathcal{H}_{\mathcal{B}}(X)$ is a topological group. Note that
the point $\alpha$ does not have a neighborhood continuum and
$\mathcal{B}$ is not a hereditarily closed network of $X$.
\end{example}

A topological space $X$ is called semiregular provided that for
each $U\in X$ and $x\in U$ there exists a regular open set $V$ in
$X$ such that $x\in V\subseteq U$.

\begin{proposition}\label{LocCompactSpaceCase} Let $X$ be a locally compact
(semiregular) space and $\mathcal{B}$ consists of all (regular
open) open sets with compact closure. Then
$\mathcal{H}_{\mathcal{B}}(X) \geqslant \mathcal{H}_c(X)$ and
$\mathcal{H}_{\overline{\mathcal{B}}}(X)$ is homeomorphic to
$\mathcal{H}_c(X)$.
\end{proposition}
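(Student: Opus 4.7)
The proposition breaks into two claims. For $\mathcal{H}_{\mathcal{B}}(X) \geqslant \mathcal{H}_c(X)$, I would show that every subbasic compact-open neighborhood $[K,V]$, with $K$ compact and $V$ open, is open in $\mathcal{H}_{\mathcal{B}}(X)$. Given $h \in [K,V]$, every $x\in K$ lies in the open set $h^{-1}(V)$. Using local compactness, and semiregularity in the regular-open variant, I can choose $U_x \in \mathcal{B}$ with $x \in U_x \subset h^{-1}(V)$: first pick a neighborhood with compact closure inside $h^{-1}(V)$, then, if required, shrink to a regular open subset containing $x$ via semiregularity (its closure remains compact as a closed subset of the compact closure already chosen). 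A finite subcover $U_{x_1},\ldots,U_{x_n}$ of $K$ then yields the $\mathcal{B}$-open neighborhood $\bigcap_i [U_{x_i},V]$ of $h$, and for any $g$ in this intersection, $g(K) \subset g\bigl(\bigcup_i U_{x_i}\bigr) = \bigcup_i g(U_{x_i}) \subset V$, so it sits inside $[K,V]$.

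For the homeomorphism $\mathcal{H}_{\overline{\mathcal{B}}}(X) \cong \mathcal{H}_c(X)$, the inclusion $\mathcal{H}_{\overline{\mathcal{B}}}(X) \leqslant \mathcal{H}_c(X)$ is immediate, since for every $U \in \mathcal{B}$ the set $\overline{U}$ is compact by the very definition of $\mathcal{B}$, so $[\overline{U},V]$ is already a subbasic member of the compact-open topology. The reverse inclusion is the substantive half. Given $h \in [K,V]$ with $K$ compact, I would, for each $x \in K$, use local compactness together with the regularity that locally compact Hausdorff spaces enjoy to choose an open $W_x$ with $x \in W_x \subset \overline{W_x} \subset h^{-1}(V)$ and $\overline{W_x}$ compact. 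In the first case I set $U_x := W_x$; in the semiregular case I apply semiregularity to obtain a regular open $U_x$ with $x \in U_x \subset W_x$. Either way $U_x \in \mathcal{B}$, and $\overline{U_x} \subset \overline{W_x} \subset h^{-1}(V)$. Taking a finite subcover $U_{x_1},\ldots,U_{x_n}$ of $K$, the set $\bigcap_i [\overline{U_{x_i}},V]$ is a $\overline{\mathcal{B}}$-open neighborhood of $h$; any $g$ in the intersection sends $K \subset \bigcup_i U_{x_i} \subset \bigcup_i \overline{U_{x_i}}$ into $V$, so it is contained in $[K,V]$.

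The only real care lies in the semiregular case: one needs a regular open set that simultaneously contains $x$, has compact closure, and whose closure is pinned inside $h^{-1}(V)$. The crux is to carry out the two shrinkings in the correct order — first use local compactness and the regularity of locally compact Hausdorff spaces to land inside $h^{-1}(V)$ with a set of compact closure, then invoke semiregularity within this smaller set to recover regular-openness without enlarging the closure. No individual step is deep; this is the main (minor) obstacle I anticipate, and otherwise the argument is a standard compactness patching.
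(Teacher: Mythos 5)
Your proof is correct and follows essentially the same route as the paper: both arguments squeeze members of $\mathcal{B}$ (with compact closures) between $K$ and $h^{-1}(V)$ to show the compact-open subbasic sets are open in the $\mathcal{B}$-open and $\overline{\mathcal{B}}$-open topologies, the trivial inclusion $\mathcal{H}_{\overline{\mathcal{B}}}(X)\leqslant \mathcal{H}_c(X)$ being immediate from compactness of the closures. The only cosmetic difference is that the paper interposes a single regular open set $V$ with $K\subset V\subset\overline{V}\subset h^{-1}(U)$ (in effect taking the interior of the closure of a suitable neighbourhood of $K$), whereas you cover $K$ by finitely many basic sets and intersect the corresponding subbasic neighbourhoods; both are standard and equally valid.
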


\begin{proof} Clearly that $\mathcal{B}$ is a Urysohn homogeneously encircling
family. Let $h \in [K, U]$ where $K$ is a compact and $U$ is an
open set. There is a regular open set $V$ such that $K \subset V
\subset \overline{V} \subset h^{-1}(U)$ and $\overline{V}$ is
compact. It immediately follows that $h \in [V,U] \subset [K,U]$
and $h \in [\overline{V},U] \subset [K,U]$.
\end{proof}

\begin{proposition} $\mathcal{H}_{\mathcal{B}}(X) \geqslant
\mathcal{H}_{\overline{\mathcal{B}}}(X)$ for every Urysohn
homogeneously encircling hereditarily open family $\mathcal{B}$.
\end{proposition}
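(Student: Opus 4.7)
The goal is to verify that every subbasic $\overline{\mathcal{B}}$-open neighborhood of an $h\in\mathcal{H}(X)$ already contains some $\mathcal{B}$-open neighborhood of $h$. So I would fix $h\in[\overline{U},V]$ with $U\in\mathcal{B}$ and $V$ open in $X$ --- equivalently $\overline{U}\subset h^{-1}(V)$ --- and hunt for $W\in\mathcal{B}$ with $h\in[W,V]\subset[\overline{U},V]$.

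The natural candidate is any $W\in\mathcal{B}$ satisfying $\overline{U}\subset W\subset h^{-1}(V)$. Indeed, $h(W)\subset V$ would place $h\in[W,V]$, and for every $g\in[W,V]$ the monotonicity $g(\overline{U})\subset g(W)\subset V$ would force $g\in[\overline{U},V]$. Thus the whole problem collapses to manufacturing such a $W$.

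To produce $W$, I would first invoke condition (c) of the Urysohn homogeneously encircling family to obtain some $U'\in\mathcal{B}$ with $\overline{U}\subset U'$, and then set $W:=U'\cap h^{-1}(V)$. This $W$ is open in $X$ and contained in $U'\in\mathcal{B}$; the hereditary openness hypothesis on $\mathcal{B}$ is exactly what promotes it to a member of $\mathcal{B}$. The inclusions $\overline{U}\subset U'$ and $\overline{U}\subset h^{-1}(V)$ immediately yield the required sandwich $\overline{U}\subset W\subset h^{-1}(V)$, closing the argument.

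The only subtle point, and the main obstacle, is recognizing that the hereditary openness is the hypothesis that lets one cut $U'$ down to lie inside $h^{-1}(V)$: the set $h^{-1}(V)$ need not itself belong to $\mathcal{B}$, so neither conditions (a)--(d) nor the Urysohn interpolation property alone suffice. Note that conditions (a), (b), (d) play no role in this reduction --- only (c) and the hereditary openness are used.
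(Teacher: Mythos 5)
Your proposal is correct and is essentially identical to the paper's own argument: the paper also takes $W\supset\overline{V}$ from condition (c) and passes to $h\in[W\cap h^{-1}(U),U]\subset[\overline{V},U]$, relying implicitly on hereditary openness to keep $W\cap h^{-1}(U)$ in $\mathcal{B}$. Your write-up just makes explicit the role of that hypothesis, which the paper leaves unstated.
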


\begin{proof} Let $h \in [\overline{V},U]$ where $V\in
\mathcal{B}$. There exists set $W\in \mathcal{B}$ such that
$W\supset \overline{V}$. Obviously that $h\in [W \cap h^{-1}(U),
U] \subset [\overline{V}, U]$.

\end{proof}

\begin{corollary}(Dijkstra's Theorem
\ref{th11}) Note that if every point from $X$ has a neighborhood
which is continuum. Then $\mathcal{H}_c(X)$ is a topological
group.
\end{corollary}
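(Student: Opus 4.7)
The plan is to derive the corollary as a direct combination of Theorem~\ref{TopoGroupCondition2} and Proposition~\ref{LocCompactSpaceCase}. First, note that the continuum neighbourhood hypothesis makes $X$ locally compact (each continuum is compact); being Hausdorff and locally compact it is also semiregular, admitting at each point a base of regular open sets with compact closure. Let $\mathcal{B}$ be the family of all regular open subsets of $X$ with compact closure. Proposition~\ref{LocCompactSpaceCase} then tells us that $\mathcal{H}_{\overline{\mathcal{B}}}(X)$ is homeomorphic to $\mathcal{H}_c(X)$, so it suffices to show that $\mathcal{H}_{\overline{\mathcal{B}}}(X)$ is a topological group, which I would obtain by invoking Theorem~\ref{TopoGroupCondition2}.

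To apply Theorem~\ref{TopoGroupCondition2} I would first verify that $\mathcal{B}$ is Urysohn homogeneously encircling. Condition~(a) is immediate, as homeomorphisms preserve both regular openness and compactness of closures. For~(b), if $V,W\in\mathcal{B}$ with $\overline{V}\subset W$, then $W\setminus\overline{V}$ is open with compact closure; a short computation using $V=\mathrm{int}(\overline{V})$ and $W=\mathrm{int}(\overline{W})$ shows that any open subset of $\overline{W}$ lies inside $W$ and similarly stays off $\overline{V}$, so $W\setminus\overline{V}$ is again regular open. Conditions~(c) and~(d) then follow from standard local-compactness arguments: every compact set is contained in an open set with compact closure, and one can always insert a regular open set with compact closure between $\overline{B}$ and a larger member $B'$ of $\mathcal{B}$.

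The substantive remaining step is the connectedness condition: for every $B\in\mathcal{B}$, the closure $\overline{B}$ must be contained in a finite union of connected members of $\mathcal{B}$. Since $\overline{B}$ is compact, the continuum hypothesis supplies, for each $x\in\overline{B}$, a continuum $N_x$ with $x\in\mathrm{int}(N_x)$, and compactness of $\overline{B}$ yields a finite subcover $\overline{B}\subseteq\mathrm{int}(N_{x_1})\cup\cdots\cup\mathrm{int}(N_{x_n})$. For each $i$ I would then pass to an associated connected regular open set $C_i\in\mathcal{B}$ covering $\overline{B}\cap\mathrm{int}(N_{x_i})$ and satisfying $\overline{C_i}\subseteq N_{x_i}$, for example by refining $N_{x_i}$ to a smaller continuum whose interior is itself connected and then taking its regular open hull $\mathrm{int}(\overline{\mathrm{int}(N_{x_i})})$. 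The union $C_1\cup\cdots\cup C_n$ then covers $\overline{B}$, all hypotheses of Theorem~\ref{TopoGroupCondition2} are in place, and $\mathcal{H}_{\overline{\mathcal{B}}}(X)\cong\mathcal{H}_c(X)$ is a topological group.

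The main obstacle is the construction of the connected set $C_i$ in the last step: the interior of an arbitrary continuum need not be connected, so the argument must either shrink $N_{x_i}$ carefully to a smaller continuum neighbourhood whose interior is connected, or begin with a subfamily of $\mathcal{B}$ built from finite unions of such connected sets \emph{ab initio}, while still ensuring that the resulting $\overline{\mathcal{B}}$-open topology coincides with the compact-open topology via Proposition~\ref{LocCompactSpaceCase}.
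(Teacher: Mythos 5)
Your overall strategy is exactly the paper's: make $X$ locally compact, take $\mathcal{B}$ to be the open sets with compact closure, invoke Proposition~\ref{LocCompactSpaceCase} to identify $\mathcal{H}_{\overline{\mathcal{B}}}(X)$ with $\mathcal{H}_c(X)$, and then apply Theorem~\ref{TopoGroupCondition2}. Two remarks. First, your detour through regular open sets is unnecessary: Proposition~\ref{LocCompactSpaceCase} is stated for the family of \emph{all} open sets with compact closure in a locally compact space (the regular-open/semiregular version is the parenthetical alternative), and the paper uses the plain version, so you need not verify semiregularity or that differences of regular open sets are regular open. Second, the obstacle you flag at the end --- producing \emph{connected} members of $\mathcal{B}$, i.e.\ connected open sets with compact closure, whose union contains $\overline{B}$, given only that each point has a neighbourhood which is a continuum --- is indeed the one substantive step, and you have not closed it; your observation that the interior of a continuum need not be connected is correct. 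You should be aware, however, that the paper's own proof dispatches this in a single clause (``every compact set is contained in a finite union of continua'') without constructing connected open sets from $\mathcal{B}$ at all, so your proposal is incomplete at precisely the point where the paper is terse. If you want a complete argument you must either carry out the refinement you sketch (shrinking each continuum neighbourhood to one whose interior, or whose regular-open hull, is connected) or reformulate the hypothesis of Theorem~\ref{TopoGroupCondition2} so that the covering sets are allowed to be continua rather than members of $\mathcal{B}$; as written, neither your proposal nor the paper supplies this step.
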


\begin{proof} Clearly that $X$ is a locally compact space. Let
$\mathcal{B}$ be a family of all open sets with compact closure.
It follows from Proposition~\ref{LocCompactSpaceCase} that
$\mathcal{H}_{\overline{\mathcal{B}}}(X)$ is homeomorphic to
$\mathcal{H}_c(X)$. The fact that every compact set is contained
in a finite union of continua and, by
Theorem~\ref{TopoGroupCondition2}, we have a complete proof of the
corollary.
\end{proof}

\begin{example} Let $X = \R^2$ and $\mathcal{B}$ be a family of all
bounded open sets. For $x\in X$ let the family of the sets
$A_{x,\alpha,\beta} = \{ y \in X \colon \alpha < \arg(y - x) <
\beta \}$ where $\alpha, \beta \in [0, 2\pi]$ be denoted as
$\mathcal{A}_x$. It is straightforward that the family
${\mathcal{B}}_{x_1, \ldots, x_n} = {\mathcal{B}} \cup
\mathcal{A}_{x_1} \cup \ldots \cup \mathcal{A}_{x_n}$ is a Urysohn
homogeneously encircling family and the closure of every set from
${\mathcal{B}}_{x_1, \ldots, x_n}$ is contained in the finite
union of connected sets from ${\mathcal{B}}_{x_1, \ldots, x_n}$.
The latter implies that
$\mathcal{H}_{\overline{{\mathcal{B}}}_{x_1, \ldots, x_n}}(X)$ is
a topological group due to the Theorem~\ref{TopoGroupCondition2}.
It is easy to see that
$\mathcal{H}_{\overline{{\mathcal{B}}}_{x_1, \ldots, x_n}}(X)
\leqslant \mathcal{H}_{\overline{{\mathcal{B}}}_{y_1, \ldots,
y_m}}$ if and only if $\{x_1, \ldots, x_n\} \subset \{y_1, \ldots,
y_m\}$. So we have an infinite lattice of group topologies on
$\mathcal{H}(X)$.
\end{example}

Let $\mathcal{RO}$ be a family of all regular open subsets of $X$.
It is known that $\mathcal{H}_\mathcal{RO}(X)$ is a topological
group (Theorem 4 in \cite{por}). Then for every family
$\mathcal{B}$ satisfying conditions from
Theorem~\ref{TopoGroupCondition}, $\mathcal{H}_{\mathcal{B}}(X)$
is a topological group and $\mathcal{H}_{\mathcal{B}}(X) \leqslant
\mathcal{H}_\mathcal{RO}(X)$.

\section{Metrization condition}

A space $X$ is a $q$-space if for every point $x\in X$ there
exists a sequence $\{U_n : n\in \mathbb{N}\}$ of open
neighbourhoods of $x$ in $X$ such that for every choice $x_n\in
U_n$, the sequence $\{x_n : n\in \mathbb{N}\}$ has a cluster
point.

  D.Gauld and J. van Mill proved that a manifold $M$ is
metrizable if and only if ${\mathcal H}_{c}(M)$ is a $q$-space.

\begin{theorem}(Theorem~4.2 in \cite{4}) For a manifold $M$, the
following statements are equivalent:

\begin{enumerate}

\item $M$ is metrizable;

\item $M$ is separable and metrizable;

\item ${\mathcal H}_{c}(M)$ is first countable;

\item ${\mathcal H}_{c}(M)$ is a $q$-space;

\item ${\mathcal H}_{c}(M)$ is metrizable;

\item ${\mathcal H}_{c}(M)$ is separable and metrizable.

\end{enumerate}

\end{theorem}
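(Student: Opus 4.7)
The plan is to close a loop of implications among the six statements, relying throughout on the fact that by Dijkstra's Theorem~\ref{th11}, $\mathcal{H}_c(M)$ is a topological group (every point of a manifold has a closed-ball neighbourhood, which is a continuum). The implications $(6)\Rightarrow(5)\Rightarrow(3)$, $(5)\Rightarrow(4)$, $(6)\Rightarrow(2)$, and $(2)\Rightarrow(1)$ are immediate, and the Birkhoff--Kakutani theorem for topological groups yields $(3)\Leftrightarrow(5)$. For $(4)\Rightarrow(3)$ I would use the known fact that a topological group which is a $q$-space is first countable: starting from a $q$-sequence $\{U_n\}$ at the identity and passing to a symmetric squeeze with $V_{n+1}^{2}\subseteq V_n\subseteq U_n$, one shows $\{V_n\}$ is a countable base at $e$, since otherwise a sequence $g_n\in V_n\setminus W$ (with $W$ an alleged bad neighbourhood of $e$) would have a $q$-cluster point forced into $\bigcap_n\overline{V_n}\subseteq W$, a contradiction.

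For $(2)\Rightarrow(6)$ I would appeal to the classical result that when $M$ is separable metric, hence Polish and locally compact, the space $C(M,M)$ with the compact-open topology is Polish and $\mathcal{H}(M)$ is a $G_{\delta}$ subset, so $\mathcal{H}_c(M)$ itself is a Polish group, in particular separable and metrizable.

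The main obstacle is $(3)\Rightarrow(1)$: first countability of $\mathcal{H}_c(M)$ at the identity must force $M$ to be metrizable. Fix a countable base $\{[K_n,U_n]:n\in\mathbb{N}\}$ at $e$. I claim $M=\bigcup_{n}K_n$; once this is established, $M$ is $\sigma$-compact (each $K_n$ meets only finitely many components, so altogether $M$ has only countably many components), and a $\sigma$-compact locally Euclidean Hausdorff space is second countable, hence metrizable. To prove the claim, suppose for contradiction $x\notin\bigcup_{n}K_n$, and fix a Euclidean ball $V'$ around $x$. For any finite $F\subset\mathbb{N}$, since $x\notin K_i$ for $i\in F$ and each $K_i$ is compact, $x$ admits a Euclidean neighbourhood $\Omega_{F}$ disjoint from $\bigcup_{i\in F}K_i$. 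Inside $\Omega_{F}$ construct a compactly-supported homeomorphism $h$, equal to the identity outside $\Omega_{F}$, that sends $x$ outside $V'$ (standard in a Euclidean chart). Then $h$ fixes each $K_i$ pointwise, so $h\in\bigcap_{i\in F}[K_i,U_i]\setminus[\{x\},V']$, showing that no finite subintersection of the supposed base lies in $[\{x\},V']$ and contradicting first countability. The delicate point is that the separating $h$ is produced per finite $F$ rather than uniformly in $n$, but this is exactly what the definition of a countable base requires.
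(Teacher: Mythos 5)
The paper contains no proof of this statement: it is imported verbatim as Theorem~4.2 of Gauld and van Mill \cite{4}, so your argument can only be judged on its own terms, and it has two genuine gaps. The first is your step $(4)\Rightarrow(3)$. The ``known fact'' that a topological group which is a $q$-space is first countable is false: every compact space is a $q$-space (take all $U_n$ equal to the whole space), so any compact non-metrizable group such as $\{0,1\}^{\omega_1}$ is a counterexample. The flaw in your sketch is the inclusion $\bigcap_n\overline{V_n}\subseteq W$: a $q$-sequence carries no smallness requirement, the squeezed sets $V_n$ need not shrink, the cluster point of $(g_n)$ is only forced into $\bigcap_n V_n$, and that intersection can be a large subgroup not contained in $W$. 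Getting from the $q$-space property to metrizability is precisely the substantive content of the Gauld--van Mill theorem and cannot be delegated to abstract topological-group theory; it has to use the manifold structure of $M$.

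The second gap is in $(3)\Rightarrow(1)$. You fix the ball $V'$ once and for all, but $\Omega_F$ must avoid the compact set $\bigcup_{i\in F}K_i$; if the sets $K_i$ accumulate at $x$ without containing it, then $\Omega_F$ is eventually contained in $V'$, and a homeomorphism supported in $\Omega_F\subseteq V'$ cannot move $x$ outside $V'$. What your argument actually establishes is only that no point of $M$ has a neighbourhood missing all the $K_n$, i.e.\ that $\bigcup_nK_n$ is dense; this gives separability of $M$, not $\sigma$-compactness, and separable manifolds need not be metrizable (the Pr\"ufer/Moore surface). Worse, the intermediate claim $M=\bigcup_nK_n$ appears to be false already for $M=\mathbb{R}$: using monotonicity one can pin down $h(0)$ by controlling $h$ on finite sets of nonzero rationals, so there are countable bases at $e$ whose compacta all omit $0$. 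Finally, a bookkeeping point: your implication graph has no arrow leaving $(1)$, so even granting every step the cycle is not closed; you need $(1)\Rightarrow(2)$, which for connected manifolds is standard (metrizable plus connected plus locally second countable implies second countable) but should be stated.
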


We say that $X$ is {\it strongly locally homogeneous} (abbreviated
$\mathrm{SLH}$) if it has a base $\mathcal{B}$ such that for all
$B\in \mathcal{B}$ and $x,y \in B$ there is an element $f\in
\mathcal{H}(X)$ that is supported on $B$ (that is, $f$ is the
identity outside $B$) and moves $x$ to $y$. Clearly every manifold
is $\mathrm{SLH}$.

We are looking for conditions on ${\mathcal
H_{\overline{\mathcal{B}}}}(X)$ and the family $\mathcal{B}$ for a
$\mathrm{SLH}$ space $X$ that ensure that $X$ is metrizable.

Recall that {\it strong development} for $X$ is a sequence of open
covers $\{V_n\}$ such that for every $x\in X$ and every
neighborhood $U$ of $x$ there are open $W$ and number $i$ such
that $x\in W\subset \mathrm{St}(W,V_i) \subset U$.

\begin{theorem}
Let $X$ be $\mathrm{SLH}$ space and let $\mathcal{B}$ be Urysohn
homogeneously encircling base. If ${\mathcal
H_{\overline{\mathcal{B}}}}(X)$ is metrizable topological group
then $X$ is metrizable.
\end{theorem}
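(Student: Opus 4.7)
\textit{Proof plan.} The plan is to build a strong development for $X$ from a countable neighborhood base of the identity in the metrizable topological group $\mathcal{H}_{\overline{\mathcal{B}}}(X)$. Fix a decreasing sequence $\{W_n\}$ of symmetric open neighborhoods of $e$ that form a base at $e$ and satisfy $W_{n+1}^3\subset W_n$, which is available in any metrizable topological group, and set $\mathcal{V}_n=\{W_n\cdot y:y\in X\}$. The aim is to show that $\{\mathcal{V}_n\}$ is a strong development of $X$.

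First I would establish the key auxiliary fact: for every neighborhood $W$ of $e$ in $\mathcal{H}_{\overline{\mathcal{B}}}(X)$ and every $x\in X$, the orbit $W\cdot x$ is a neighborhood of $x$ in $X$. Reducing $W$ to a finite intersection of subbasic sets $[\overline{B_i},U_i]$, $i=1,\ldots,k$, note that $e\in W$ forces $\overline{B_i}\subset U_i$, so for each $i$ one has either $x\in U_i$ or $x\notin\overline{B_i}$. Choose an open neighborhood $A_i$ of $x$ with $A_i\subset U_i$ in the first alternative and $A_i\cap\overline{B_i}=\emptyset$ in the second; intersecting the $A_i$ and then shrinking inside an element of the SLH base yields $A$ with $x\in A$ on which SLH produces, for every $y\in A$, a homeomorphism $h$ supported on $A$ with $h(x)=y$. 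For such $h$ one checks $h(\overline{B_i})\subset\overline{B_i}\cup A\subset U_i$ in both cases, so $h\in W$, giving $A\subset W\cdot x$.

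With this fact in hand, every $\mathcal{V}_n$ is an open cover of $X$, and continuity of the evaluation map at $(e,x)$ supplies, for every open $U\ni x$, an index $n$ with $W_n\cdot x\subset U$. To verify the strong development property, given such $n$, choose $m$ with $W_m^3\subset W_n$ and put $W=W_m\cdot x$, which is open by the auxiliary fact. For any $z\in\mathrm{St}(W,\mathcal{V}_m)$ there exist $y\in X$ and $h_1,h_2,h_3\in W_m$ with $z=h_1(y)$ and $h_2(y)=h_3(x)$, whence $z=h_1h_2^{-1}h_3(x)\in W_m^3\cdot x\subset W_n\cdot x\subset U$, i.e. $x\in W\subset\mathrm{St}(W,\mathcal{V}_m)\subset U$. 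Hence $\{\mathcal{V}_n\}$ is a strong development and $X$ is metrizable.

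The main obstacle is the auxiliary openness statement for $W\cdot x$: it forces one to reconcile the Urysohn homogeneously encircling base $\mathcal{B}$ with the SLH base, and hinges on the dichotomy $x\in U_i$ versus $x\notin\overline{B_i}$ that is available precisely because $\overline{B_i}\subset U_i$ in every subbasic neighborhood of the identity in the $\overline{\mathcal{B}}$-open topology. Once that is secured, the star-refinement that witnesses the strong development is produced almost automatically from the metric-group structure of $\mathcal{H}_{\overline{\mathcal{B}}}(X)$.
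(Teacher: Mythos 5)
Your proposal is correct and follows essentially the same route as the paper: a countable symmetric base $\{W_n\}$ at the identity with $W_{n+1}^3\subset W_n$, the orbits $W_n\cdot x$ (the paper's $\gamma_x(U_n)$) shown to be neighbourhoods of $x$ via the SLH base and the dichotomy $x\in U_i$ versus $x\notin\overline{B_i}$, the cube condition giving the star-refinement, and Moore's metrization theorem. The only cosmetic difference is that the paper sidesteps the question of openness of the orbits by covering with $\mathrm{Int}(\gamma_x(U_n))$, whereas you assert openness of $W_m\cdot x$ directly; that does follow from your auxiliary fact by translating it to each point $g(x)$ of the orbit (since $W_m\cdot x=(W_mg^{-1})\cdot g(x)$), but it deserves a line of justification.
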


\begin{proof} Let $\{U_n\}$ be a sequence of open symmetric neighborhoods of the
identity map $e \in {\cal H_{\overline{\mathcal{B}}}}(X)$ such
that $U_{n+1}^3 \subset U_n$ for every $n$ and $\{U_n\}$ is a
local base at $e$. Let $V_n = \{\mathrm{Int}(\gamma_x(U_n)) \colon
x \in X\}$ where $\gamma_x\colon h \mapsto h(x)$.  We claim that
$\{V_n\}$ is a strong development for $X$ which suffices by the
Moore metrization Theorem.

Claim that $x\in \mathrm{Int}(\gamma_x(U_n))$ for every $x\in X$
and every number $n$.

Let $U_n\supset \bigcap_{i=1}^k [\overline{P_i},Q_i]$ and $x\in
Q_i$ for $i<j$ and $x\notin Q_i$ for $i\geqslant j$. Since $X$ is
$\mathrm{SLH}$, $x$ has an open neighborhood $W$ such that
$W\subset Q_i$ for $i<j$ and $W \cap \overline{P_i} = \emptyset$
for $i\geqslant j$ and for any $y\in W$ there is homeomorphism $h$
supported by $W$ such that $h(x) = y$. So $x\in W\subset
{\mathrm{int}}(\gamma_x(U_n))$.

Let $U = \gamma_x(U_n)$, $W = \gamma_y(U_n)$ and $U\cap W
\neq\emptyset$. For any $w\in W$ there are $z\in U\cap W$ and
$f,g,h \in U_n$ such that $f(x) = z$, $g(y)=z$, $h(y)=w$. The
observation $h \circ g^{-1} \circ f \in U_n^3 \subset U_{n-1}$
implies $\mathrm{ St}(\gamma_x(U_n), V_n) \subset
\gamma_x(U_{n-1})$ i.e. $V_n$ is a star refinement of $V_{n-1}$.
It is suffices to prove that $\bigcup_n V_n$ is a base of $X$. Let
$V$ be an open set and $x\in V$. There is $B \in \mathcal{B}$ such
that $x\in B \subset \overline{B} \subset V$. So we can choose
$U_n\subset [\overline{B},V]$. It is immediate that $x\in
\mathrm{int}(\gamma_x(U_n)) \subset V$.
\end{proof}

\section{Zero-cozero topology}

Let us recall that a~set $F\subseteq X$ is a {\it zero set} if
there exists a continuous function $f\in C(X,\mathbb{R})$ such
that $F=\{x\in X:f(x)=0\}$. A~complement of a~zero set is a~{\it
cozero set}.

 Consider the family $\cal P$ of the sets of the form $[A,U] =
\{h\in \mathcal{H}(X) \colon h(A) \subset U\}$ where $A$ is a
zero-set of $X$ and $U$ is a cozero set of $X$. Let the set
$\mathcal{H}(X)$, equipped with {\it the zero-cozero topology}
generated by $\cal P$, be denoted as $\mathcal{H}_{zc}(X)$. When
$A$ runs over all closed sets in $X$ and $U$ is open in $X$ then
we get {\it the closed-open topology}.

Since any self-homeomorphism of a Tychonoff space $X$ continuously
extends to the Stone- $\check{C}$ech compactification $\beta X$ of
$X$, then $\mathcal{H}(X)$ embeds as a subgroup in
$\mathcal{H}(\beta X)$. Thereby, the relativisation to
$\mathcal{H}(X)$ of the compact-open topology of
$\mathcal{H}(\beta X)$ is an admissible group topology.

\begin{theorem} For a Tychonoff space $X$, $\mathcal{H}_{zc}(X)$ is the relativisation to $\mathcal{H}(X)$ of
the compact-open topology of $\mathcal{H}(\beta X)$.

\end{theorem}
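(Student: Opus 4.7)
The plan is to prove both inclusions between $\mathcal{H}_{zc}(X)$ and the restriction $\mathcal{T}$ of the compact-open topology of $\mathcal{H}(\beta X)$ to $\mathcal{H}(X)$, using the correspondence between zero/cozero sets of $X$ and their $\beta X$-extensions. Throughout, for $h\in\mathcal{H}(X)$ I write $\tilde h\in\mathcal{H}(\beta X)$ for its unique continuous extension, and I use the density of $X$ in $\beta X$ together with the fact that $\tilde h(X)=X$.

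For $\mathcal{H}_{zc}(X)\le\mathcal{T}$, take a subbasic set $[A,U]$ with $A$ a zero set and $U$ a cozero set of $X$, and set $A^{*}=\cl_{\beta X}(A)$ (compact in $\beta X$) and $U^{*}=\beta X\setminus\cl_{\beta X}(X\setminus U)$ (open in $\beta X$). I would invoke the standard fact that disjoint zero sets in a Tychonoff space are completely separated and hence have disjoint closures in $\beta X$. Combined with $\tilde h(\cl_{\beta X}(A))=\cl_{\beta X}(h(A))$ (since $\tilde h$ is a self-homeomorphism of $\beta X$), this yields the equivalence $h(A)\subseteq U \Longleftrightarrow \tilde h(A^{*})\subseteq U^{*}$. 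Therefore $[A,U]=[A^{*},U^{*}]\cap\mathcal{H}(X)$, which is subbasic open in $\mathcal{T}$.

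For $\mathcal{T}\le\mathcal{H}_{zc}(X)$, fix a subbasic compact-open set $[K,V]\cap\mathcal{H}(X)$ and $h$ in it. Since $\beta X$ is compact Hausdorff, Urysohn's lemma provides $g\in C(\beta X,[0,1])$ with $g\equiv 1$ on $\tilde h(K)$ and $g\equiv 0$ on $\beta X\setminus V$, so that $\{g>0\}\subseteq V$. For a large integer $n$, I would partition $[0,1]$ by closed intervals $I_k$ of length $2/n$ with open thickenings $\tilde I_k$ of length $3/n$ that still overlap. Setting $A_k=(g\circ h)^{-1}(I_k)$ (a zero set of $X$, as a finite intersection of zero sets) and $U_k=(g|_X)^{-1}(\tilde I_k)$ (a cozero set of $X$), a direct verification gives $h\in\bigcap_k[A_k,U_k]$, and any $g'$ in this intersection satisfies $\sup_{x\in X}|g(g'(x))-g(h(x))|<5/(2n)$. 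Density of $X$ in $\beta X$ together with continuity transfers this to $\sup_{\beta X}|(g\circ g')^{\beta}-(g\circ h)^{\beta}|\le 5/(2n)$, so for $n$ large $g\circ\tilde{g'}$ stays positive on $K$, giving $\tilde{g'}(K)\subseteq\{g>0\}\subseteq V$.

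The main obstacle is the second inclusion, because $K$ and $V$ reside in $\beta X$ and may reference points of $\beta X\setminus X$ that have no intrinsic description via zero-cozero data on $X$. The resolution is to convert the set-theoretic condition $\tilde{g'}(K)\subseteq V$ into a uniform positivity statement on the scalar function $g\circ g'$ over $\beta X$, and then realize uniform approximation on $X$ through finitely many zero-cozero prescriptions $[A_k,U_k]$ arising from a tagged partition of the codomain $[0,1]$; the $\beta$-extension then propagates the uniform bound automatically by density.
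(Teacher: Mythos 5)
Your proposal is correct, and its first half is essentially the paper's entire proof: the paper likewise fixes a subbasic $[A,U]$, forms $B=\cl_{\beta X}A$ and $W=\beta X\setminus\cl_{\beta X}(X\setminus U)$, and verifies the set equality $[A,U]=[B,W]\cap\mathcal{H}(X)$ using exactly your key fact that the disjoint zero sets $f(A)$ and $X\setminus U$ have disjoint closures in $\beta X$. Where you genuinely diverge is that the paper stops there: it only shows that each zero-cozero subbasic set is the trace of a compact-open subbasic set, which yields $\mathcal{H}_{zc}(X)\le\mathcal{T}$ but leaves the reverse inclusion --- that the trace of an arbitrary $[K,V]$ with $K\subseteq\beta X$ compact is zero-cozero open --- unaddressed. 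Your second half supplies precisely this missing direction, by a quantitative argument: a Urysohn function $g$ separating $\tilde h(K)$ from $\beta X\setminus V$, finitely many zero-cozero constraints $[A_k,U_k]$ read off from a partition of $[0,1]$, and the observation that a uniform bound on $|g\circ g'-g\circ h|$ over the dense set $X$ persists on $\beta X$. This is sound (the $A_k$ are zero sets as preimages of closed intervals, the $U_k$ cozero as preimages of open intervals, and only finitely many are needed), though the same direction can also be reached more set-theoretically from normality of $\beta X$: shrink $V$ to a cozero set $V'$ of $\beta X$ with $\tilde h(K)\subseteq V'\subseteq\cl_{\beta X}V'\subseteq V$, choose a zero set $Z$ of $\beta X$ with $K\subseteq\mathrm{int}\,Z\subseteq Z\subseteq\tilde h^{-1}(V')$, and apply the already-proved correspondence to $A=Z\cap X$ and $U=V'\cap X$. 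Either way, your write-up is the more complete of the two.
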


\begin{proof} We can consider the space $\mathcal{H}(X)$ as a subgroup in
$\mathcal{H}(\beta X)$ then we denote the space $\mathcal{H}(X)$
as $\widetilde{\mathcal{H}}(X)$.

 Let $[A,U]$ be a subbase element of $\mathcal{H}_{zc}(X)$ where $A$ is a zero-set of
$X$ and $U$ is a cozero-set in $X$. Let $B:=\overline{A}^{\beta
X}$ and $W=\beta X \setminus \overline{(X\setminus U)}^{\beta X}$.
Clearly that $[B, W]$ is an element of the compact-open topology
of $\mathcal{ H}(\beta X)$. Claim that $[B, W]\cap
\widetilde{\mathcal{H}}(X)$ is homeomorphic to $[A,U]$.

Let $f\in [B, W]\cap \widetilde{\mathcal{H}}(X)$. Then
$f(B)\subseteq W$. Then $h(A)=f(A)\subset U=X\cap W$ where $h\in
\mathcal{H}(X)$ and  $h=f\upharpoonright X$. Hence $h\in [A,U]$.

Let $f\in [A,U]$. Then $f(A)\subseteq U$ and $f(A)$ is a zero-set
of $X$. Since $Z:=X\setminus U$ is a zero-set of $X$,
$\overline{f(A)}^{\beta X}\cap \overline{Z}^{\beta X}=\emptyset$
and $\overline{f(A)}^{\beta X}\subseteq W=\beta X \setminus
\overline{Z}^{\beta X}$. Since any self-homeomorphism of $X$
continuously extends to $\beta X$, there is $\widetilde{f}\in
\widetilde{\mathcal{H}}(X)\subseteq \mathcal{H}(\beta X)$  and
$\widetilde{f}\upharpoonright X=f$. Note that
$\widetilde{f}(A)=f(A)$ and $\widetilde{f}(B)\subseteq
\overline{f(A)}^{\beta X}\subseteq W$. Thus $\widetilde{f}\in
[B,W]\cap \widetilde{\mathcal{H}}(X)$.

\end{proof}

In fact, if $X$ is normal space then the closed-open topology on
${\mathcal{H}}(X)$ coincides with the zero-cozero topology
${\mathcal{H}}(X)$. Then we get the next result of A. Di Concilio.

\begin{corollary}(Theorem 1.8 in \cite{26}) When $X$ is $T_4$, the relativization $\tau_{\beta X}$ to
${\mathcal{H}}(X)$ of the compact-open topology on
${\mathcal{H}}(\beta X)$ is the closed-open topology.
\end{corollary}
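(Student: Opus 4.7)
\textit{Proof plan.} The strategy is to reduce the corollary to the preceding theorem by showing that on a $T_4$ space $X$ the closed-open topology on $\mathcal{H}(X)$ coincides with the zero-cozero topology $\mathcal{H}_{zc}(X)$; the theorem then identifies this common topology with $\tau_{\beta X}$.

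First I would verify the easy inclusion: every zero set is closed and every cozero set is open, so each subbasic set $[A,U]$ of $\mathcal{H}_{zc}(X)$ is also a subbasic set of the closed-open topology. Hence the zero-cozero topology is coarser than the closed-open topology without any separation hypothesis on $X$.

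For the non-trivial inclusion I would fix $h\in[F,V]$ with $F$ closed and $V$ open, and produce a zero set $A_0$ and cozero set $U_0$ with $h\in[A_0,U_0]\subseteq[F,V]$. The plan is to apply Urysohn's lemma (available because $X$ is $T_4$) twice, inserting a zero/cozero layer on each side. First, since $h(F)\subseteq V$, choose continuous $f\colon X\to[0,1]$ with $f\equiv 0$ on $h(F)$ and $f\equiv 1$ on $X\setminus V$, and set $U_0=\{x:f(x)<1/2\}$; this is a cozero set (the complement of the zero set of $\max(0,f-1/2)$) contained in $V$ and containing $h(F)$. Then $h^{-1}(U_0)$ is an open neighborhood of $F$, so a second application of Urysohn yields continuous $g\colon X\to[0,1]$ with $g\equiv 0$ on $F$ and $g\equiv 1$ on $X\setminus h^{-1}(U_0)$; take $A_0=\{x:g(x)\le 1/2\}$, which is a zero set (realized by $\max(0,g-1/2)$) satisfying $F\subseteq A_0\subseteq h^{-1}(U_0)$. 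Then $h(A_0)\subseteq U_0$, so $h\in[A_0,U_0]$, and the inclusions $F\subseteq A_0$, $U_0\subseteq V$ give $[A_0,U_0]\subseteq[F,V]$ as needed. This shows the closed-open topology is coarser than the zero-cozero topology.

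Finally, combining the two inclusions, the closed-open topology equals $\mathcal{H}_{zc}(X)$, and invoking the preceding theorem concludes that this topology is exactly $\tau_{\beta X}$. The only delicate point is arranging the \emph{two} Urysohn steps in the correct order so that the zero set lies on the domain side and the cozero set on the range side with the right inclusions; once the choice of the cutoff level $1/2$ is in place, the verification is straightforward.
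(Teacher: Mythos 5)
Your proposal is correct and follows exactly the route the paper takes: the paper derives this corollary from the preceding theorem via the one-line observation that on a normal space the closed-open and zero-cozero topologies coincide, and your two Urysohn insertions are just the standard details behind that observation. One trivial slip: $U_0=\{x:f(x)<1/2\}$ is the cozero set of $\max(0,\tfrac12-f)$, not the complement of the zero set of $\max(0,f-\tfrac12)$ (the latter is $\{f>1/2\}$); this does not affect the argument.
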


\begin{corollary}\label{cor1} If $X$ is a Tychonoff space, then
$\mathcal{H}_{zc}(X)$ with the zero-cozero topology is a
topological group.
\end{corollary}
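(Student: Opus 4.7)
The plan is to deduce the corollary directly from the theorem immediately preceding it, together with the well-known fact (stated earlier in the paper) that $\mathcal{H}(Y)$ with the compact-open topology is a topological group whenever $Y$ is a compact Hausdorff space.

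First, I would invoke the preceding theorem to identify $\mathcal{H}_{zc}(X)$ with the subspace topology that $\mathcal{H}(X)$ inherits from $\mathcal{H}(\beta X)$ (equipped with the compact-open topology), via the embedding $h \mapsto \widetilde{h}$ that extends each self-homeomorphism of the Tychonoff space $X$ to its Stone--\v{C}ech compactification. Since $\beta X$ is compact Hausdorff, $\mathcal{H}(\beta X)$ with the compact-open topology is a topological group.

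Next, I would observe that the image of $\mathcal{H}(X)$ inside $\mathcal{H}(\beta X)$ is a subgroup: the identity of $\beta X$ comes from the identity of $X$, and composition and inversion of self-homeomorphisms of $X$ lift to composition and inversion of their extensions (both are unique continuous extensions to $\beta X$). Hence the subgroup structure of $\widetilde{\mathcal{H}}(X) \subset \mathcal{H}(\beta X)$ agrees with the group structure of $\mathcal{H}(X)$.

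The conclusion then follows from the elementary fact that a subgroup of a topological group, endowed with the subspace topology, is itself a topological group. There is no genuine obstacle here; the only small point to verify carefully is that the identification in the preceding theorem is a group isomorphism and not merely a homeomorphism of underlying spaces, which amounts to checking that the assignment $h \mapsto \widetilde{h}$ respects composition and inversion. This is immediate from uniqueness of continuous extensions to $\beta X$.
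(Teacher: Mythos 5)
Your proposal is correct and is precisely the argument the paper intends: the preceding theorem identifies $\mathcal{H}_{zc}(X)$ with the subgroup $\widetilde{\mathcal{H}}(X)$ of the topological group $\mathcal{H}(\beta X)$ (compact-open topology on a compact Hausdorff space), and a subgroup with the subspace topology is again a topological group. Your added care in checking that $h\mapsto\widetilde{h}$ is a group isomorphism, via uniqueness of continuous extensions to $\beta X$, is exactly the right (and only) point needing verification.
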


Important natural examples of homogeneous spaces are provided by
quotients of topological groups with respect to closed subgroups.
Though these quotients need not be topological groups themselves,
they are always homogeneous topological spaces.

A natural question for consideration is the following one
\cite{artk}: which homogeneous spaces can be represented as
quotients of topological group with respect to closed subgroup?  A
partial answer to this question is given below.

\begin{theorem}(N. Bourbaki). Every homogeneous zero-dimensional compact
space $X$ can be represented as the quotient space of a
topological group with respect to a closed subgroup.
\end{theorem}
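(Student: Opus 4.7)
The plan is to realise $X$ as the coset space $G/H$, where $G=\mathcal{H}(X)$ is the full self-homeomorphism group endowed with the compact-open topology. Because $X$ is a compact Hausdorff space, it is normal and Tychonoff, so on $\mathcal{H}(X)$ the compact-open, closed-open and zero-cozero topologies all coincide; Corollary~\ref{cor1} therefore asserts that $G$ is a topological group. Homogeneity of $X$ means precisely that the natural evaluation action of $G$ on $X$ is transitive.

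Fix a basepoint $x_0\in X$ and let $H=\{h\in G:h(x_0)=x_0\}$. Since evaluation at $x_0$ is continuous, $H$ is a closed subgroup of $G$, and transitivity furnishes a continuous bijection $\overline{\varphi}\colon G/H\to X$, $hH\mapsto h(x_0)$. To prove the theorem it suffices to check that $\overline{\varphi}$ is a homeomorphism, or equivalently that the orbit map $\varphi\colon G\to X$, $h\mapsto h(x_0)$, is open.

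Zero-dimensionality enters at this point. Since $X$ is compact Hausdorff and zero-dimensional, every finite family of pairs $(K_i,U_i)$ with $K_i$ compact inside the open set $U_i$ admits a common refinement by a finite clopen partition of $X$ subordinate to the $U_i$. Consequently, a neighbourhood base of the identity in $G$ is provided by the open (indeed clopen) subgroups
\[
V_{\mathcal{P}}=\bigcap_{P\in\mathcal{P}}[P,P],
\]
where $\mathcal{P}$ ranges over finite clopen partitions of $X$. If $P_0\in\mathcal{P}$ is the piece containing $x_0$, then plainly $\varphi(V_{\mathcal{P}})\subseteq P_0$, and openness of $\varphi$ reduces to the reverse inclusion $\varphi(V_{\mathcal{P}})=P_0$ for every such $\mathcal{P}$.

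This last equality is the heart of the matter: it says that the subgroup $V_{\mathcal{P}}$ acts transitively on the piece $P_0$. I would deduce it from the following local-homogeneity lemma for zero-dimensional compact homogeneous spaces: for every clopen $C\subseteq X$ and every $x,y\in C$ there exists $h\in\mathcal{H}(X)$ with $h(x)=y$ and $h$ the identity on $X\setminus C$. Applied to the pieces of $\mathcal{P}$, this lemma produces the required element of $V_{\mathcal{P}}$ carrying $x_0$ to any prescribed $y\in P_0$. Proving the lemma is the main obstacle: homogeneity supplies some global $g\in\mathcal{H}(X)$ with $g(x)=y$, but $g$ need not preserve $C$, so one must exploit the clopen base of $X$ together with compactness to modify $g$ by a finite sequence of clopen cut-and-paste adjustments into a homeomorphism supported inside $C$.
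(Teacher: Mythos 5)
The paper does not actually prove this statement: it is quoted as a classical theorem of Bourbaki, and what the paper proves instead is the generalization to non-compact $X$, by combining Ford's result that homogeneous zero-dimensional $T_1$-spaces are SLH (Proposition~\ref{prop1}), the acceptability of the zero-cozero topology (Proposition~\ref{prop2}), and the Arhangel'skii--Tkachenko coset-space criterion (Proposition~\ref{prop3}). Your skeleton is the same one, unwound by hand in the compact case: identify $X$ with $\mathcal{H}(X)/H$ via the orbit map and reduce openness of that map to transitivity of the subgroups $V_{\mathcal{P}}$ on the piece of the partition containing the basepoint. That reduction is sound: for compact zero-dimensional Hausdorff $X$ the sets $V_{\mathcal{P}}$ do form a base at the identity of the compact-open topology, each $V_{\mathcal{P}}$ is an open subgroup (a bijection mapping every piece of a finite partition into itself permutes the pieces), and $\varphi(hV_{\mathcal{P}})=h(\varphi(V_{\mathcal{P}}))$, so transitivity of $V_{\mathcal{P}}$ on $P_0$ gives openness of $\varphi$ and hence the homeomorphism $G/H\cong X$.

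The genuine gap is the local-homogeneity lemma you defer: for clopen $C$ and $x,y\in C$ there is $h\in\mathcal{H}(X)$ supported on $C$ with $h(x)=y$. This is precisely the SLH property with respect to the clopen base, i.e.\ Ford's Proposition~\ref{prop1}, and it carries the entire weight of the theorem; as written, your argument is not a proof without it. It does, however, admit the short ``swap'' argument you allude to. Assume $x\neq y$ and take $g\in\mathcal{H}(X)$ with $g(x)=y$ by homogeneity. Choose disjoint clopen sets $A\ni x$ and $B\ni y$ contained in $C$, and put $V=A\cap g^{-1}(B)\cap g^{-1}(C)$. Then $V$ is a clopen neighbourhood of $x$, the sets $V$ and $g(V)$ are disjoint clopen subsets of $C$, and the map equal to $g$ on $V$, to $g^{-1}$ on $g(V)$, and to the identity elsewhere is a self-homeomorphism of $X$ (it is continuous with continuous inverse on each member of a finite clopen partition) supported on $C$ and carrying $x$ to $y$. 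With this lemma inserted your argument becomes a complete and essentially self-contained proof of the compact case, whereas the paper disposes of the statement, and of its non-compact generalization, by citation.
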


\begin{proposition}\label{prop1}(L.R. Ford in \cite{for}) If a zero-dimensional $T_1$-space
$X$ is homogeneous, then it is SLH.
\end{proposition}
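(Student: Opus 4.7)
The plan is to verify the definition of SLH directly, taking as the witnessing base the family $\mathcal{B}$ of all clopen subsets of $X$, which exists because $X$ is zero-dimensional. I would then need to check the following: for every clopen $B \subseteq X$ and every pair $x,y \in B$, there is $f \in \mathcal{H}(X)$ with $f(x)=y$ and $f \equiv \mathrm{id}$ outside $B$.

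The case $x=y$ is trivial via $f=\mathrm{id}$, so assume $x \neq y$. Homogeneity first supplies some $h \in \mathcal{H}(X)$ with $h(x)=y$, but this $h$ need not be supported in $B$. The essential idea is to shrink $h$ to a small clopen piece around $x$ whose image is disjoint from it, and then swap the two pieces. Using zero-dimensionality together with $T_1$, I would separate the distinct points $x$ and $y$ by disjoint clopen sets $P \ni x$ and $Q \ni y$, and set
\[
U := B \cap h^{-1}(B) \cap P \cap h^{-1}(Q), \qquad V := h(U).
\]
Then $U$ is clopen with $x \in U \subseteq B \cap P$, while $V$ is clopen with $y \in V \subseteq B \cap Q$; in particular $U \cap V \subseteq P \cap Q = \emptyset$.

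Finally, I would define $f : X \to X$ piecewise by $f|_U = h|_U$, $f|_V = h^{-1}|_V$, and $f|_{X \setminus (U \cup V)} = \mathrm{id}$. Because $U$, $V$, and $X \setminus (U \cup V)$ are pairwise disjoint clopen sets, this rule is continuous, and it is a bijection because $h$ and $h^{-1}$ swap $U$ and $V$ while the identity fills the remainder; thus $f$ is its own inverse and belongs to $\mathcal{H}(X)$. By construction $f(x)=y$ and $f$ is supported on $U \cup V \subseteq B$, so the clopen base $\mathcal{B}$ witnesses the SLH property. The only real obstacle is guaranteeing $U \cap V = \emptyset$, so that the piecewise definition is unambiguous, and that is precisely what the clopen separation of $x$ from $y$ accomplishes; no deeper difficulty arises beyond exploiting zero-dimensionality at this one step.
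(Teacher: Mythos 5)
Your proof is correct and complete: the paper itself states this proposition without proof (citing Ford), and your argument --- shrink $h$ to a clopen neighbourhood $U$ of $x$ inside $B$ with $h(U)$ disjoint from $U$, then swap $U$ and $h(U)$ by the involution that is $h$ on $U$, $h^{-1}$ on $h(U)$, and the identity elsewhere --- is exactly Ford's classical construction. The one point needing care, that $U$ and $V=h(U)$ are disjoint clopen sets contained in $B$, is handled correctly by your use of $T_1$ plus the clopen base.
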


Suppose that $X$ is a space. A topology $\tau$ on the group
${\mathcal{H}}(X)$ will be called {\it acceptable} if it turns
${\mathcal{H}}(X)$ into a topological group such that the
evaluation function is continuous with respect to the first
variable, and for every open neighbourhood $U$ of the neutral
element $e$ of ${\mathcal{H}}(X)$ and for each $b\in X$, there
exists an open neighbourhood $V$ of $b$ such that $Id_V\subset U$,
where $Id_V:=\{h\in {\mathcal{H}}(X) : h(x)=x$ for each $x\in
X\setminus V \}$.

The next statement is obvious, in view of Corollary \ref{cor1} and
of the definition of acceptable topologies.

\begin{proposition}\label{prop2} For a Tychonoff space $X$, the zero-cozero
topology on the group ${\mathcal{H}}(X)$ is acceptable.
\end{proposition}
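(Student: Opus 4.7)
The proposition asks us to verify three items: that $\mathcal{H}_{zc}(X)$ is a topological group, that evaluation is continuous in its first variable, and the $Id_V$ refinement condition. The first item is already provided by Corollary \ref{cor1}, so I would focus on the remaining two and exploit the defining feature of Tychonoff spaces: cozero sets form a base of the topology, and for every point $x$ and every open neighborhood $W$ of $x$ there is a zero set $A$ with $x\in A\subseteq W$ (obtained from a Urysohn function separating $x$ from $X\setminus W$).

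For continuity of the evaluation in the first variable at a fixed $x_0\in X$, I would take $h_0\in \mathcal{H}(X)$ and an open set $W\ni h_0(x_0)$, shrink $W$ to a cozero set $W'$ containing $h_0(x_0)$, and then pick a zero set $A$ with $x_0\in A\subseteq h_0^{-1}(W')$. The subbasic neighborhood $[A,W']$ of $\mathcal{H}_{zc}(X)$ then contains $h_0$, and for every $g\in[A,W']$ we have $g(x_0)\in g(A)\subseteq W'\subseteq W$, proving continuity of $h\mapsto h(x_0)$ at $h_0$.

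For the $Id_V$ condition, I would take a basic neighborhood of the identity of the form $U=\bigcap_{i=1}^{n}[A_i,W_i]$ with $A_i$ zero sets, $W_i$ cozero sets, and $A_i\subseteq W_i$ (since $e\in U$). Given $b\in X$, I would split the index set into $I_1=\{i:b\in A_i\}$ and $I_2=\{i:b\notin A_i\}$ and define
$$V=\bigcap_{i\in I_1}W_i\;\cap\;\bigcap_{i\in I_2}(X\setminus A_i),$$
which is open and contains $b$ (for $i\in I_1$ we have $b\in A_i\subseteq W_i$; for $i\in I_2$ the set $A_i$ is closed and $b\notin A_i$). The key observation is that any $h\in Id_V$ fixes $X\setminus V$ pointwise, hence as a bijection maps $V$ onto $V$. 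Then for $i\in I_1$, $h(A_i\cap V)\subseteq V\subseteq W_i$ and $h(A_i\setminus V)=A_i\setminus V\subseteq W_i$; for $i\in I_2$, $A_i\cap V=\emptyset$, so $h(A_i)=A_i\subseteq W_i$. In both cases $h\in[A_i,W_i]$, hence $Id_V\subseteq U$.

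The main (mild) obstacle is step three, where one must notice both that $h\in Id_V$ forces $h(V)=V$ and that the two index classes $I_1,I_2$ must be treated separately, using either containment $V\subseteq W_i$ or disjointness $V\cap A_i=\emptyset$; everything else is a direct appeal to the Tychonoff property and to the already-established Corollary \ref{cor1}.
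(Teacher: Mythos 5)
Your proof is correct and follows exactly the route the paper intends: the paper offers no argument at all (it declares the proposition ``obvious, in view of Corollary \ref{cor1} and of the definition of acceptable topologies''), and your three verifications --- the appeal to Corollary \ref{cor1}, the standard Urysohn-function argument for continuity of $h\mapsto h(x_0)$, and the $Id_V\subset U$ check via $h(V)=V$ and the split into the indices with $b\in A_i$ and $b\notin A_i$ --- supply precisely the routine details being omitted. No gaps.
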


\begin{proposition}\label{prop3}(Proposition 3.5.14 in \cite{artk}) Suppose that
$X$ is a homogeneous SLH space, and ${\mathcal{H}}(X)$ is endowed
with an acceptable topology. Then $X$ is canonically homeomorphic
to the quotient space ${\mathcal{H}}(X)/G_a$ where $a$ is a point
of $X$ and $G_a$ is the stabilizer of $a$ in $G$, that is,
$G_a=\{g\in G: g(a)=a\}$.
\end{proposition}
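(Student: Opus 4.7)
The plan is to exhibit the canonical map explicitly and verify it is a homeomorphism. Write $G = \mathcal{H}(X)$ and consider the orbit map $\epsilon_a : G \to X$ defined by $\epsilon_a(g) = g(a)$. Since the evaluation is continuous in the first variable (which is part of the definition of an acceptable topology) and $X$ is homogeneous, $\epsilon_a$ is continuous and surjective. Moreover $\epsilon_a(g) = \epsilon_a(h)$ iff $h^{-1}g(a)=a$ iff $h^{-1}g \in G_a$ iff $g G_a = h G_a$, so $\epsilon_a$ descends to a continuous bijection $\tilde{\epsilon}_a : G/G_a \to X$, $g G_a \mapsto g(a)$. This will be the asserted canonical homeomorphism.

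Since the quotient projection $\pi : G \to G/G_a$ is open (as is always the case for topological group quotients by a subgroup), to show $\tilde{\epsilon}_a$ is a homeomorphism it suffices to prove $\epsilon_a$ is open. Using that left translation by any $g\in G$ is a self-homeomorphism of $G$ and that $\epsilon_a(gU) = g(\epsilon_a(U))$, this reduces to a single local statement at the identity: for every open neighbourhood $U$ of $e\in G$, the image $\epsilon_a(U) = \{h(a) : h\in U\}$ is a neighbourhood of $a$ in $X$.

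This is where the two hypotheses work together. Applying acceptability at the point $b=a$, I would pick an open neighbourhood $V$ of $a$ in $X$ with $\mathrm{Id}_V \subset U$. Then, using the $\mathrm{SLH}$ property, choose a base element $B$ with $a \in B \subset V$ such that for every $y \in B$ there is $f \in \mathcal{H}(X)$ supported on $B$ with $f(a) = y$. Any such $f$ is the identity on $X \setminus B \supseteq X \setminus V$, hence $f \in \mathrm{Id}_V \subset U$, and so $y = f(a) \in \epsilon_a(U)$. Therefore $B \subset \epsilon_a(U)$, proving $\epsilon_a(U)$ is a neighbourhood of $a$.

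The main obstacle is exactly this openness step, and it is resolved by marrying the topological support condition of acceptability (which produces the neighbourhood $V$ of $a$ such that every homeomorphism supported in $V$ lies in $U$) with the algebraic local transitivity of $\mathrm{SLH}$ (which produces, for every $y$ close to $a$, a homeomorphism supported on a small set sending $a$ to $y$). Once openness is in hand, $\tilde{\epsilon}_a$ is a continuous open bijection, hence a homeomorphism, and this is precisely the canonical identification $X \cong \mathcal{H}(X)/G_a$ claimed by the proposition.
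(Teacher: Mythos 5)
Your argument is correct and is essentially the standard proof of this result (the paper itself gives no proof, citing it as Proposition 3.5.14 of Arhangel'skii--Tkachenko): factor the orbit map $g\mapsto g(a)$ through $G/G_a$, reduce openness to a neighbourhood of the identity by translation, and combine the support condition in the definition of acceptability with SLH to show $\epsilon_a(U)$ is a neighbourhood of $a$. No gaps.
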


\begin{theorem} Every homogeneous zero-dimensional space $X$
can be represented as the quotient space of a topological group
with respect to a closed subgroup.
\end{theorem}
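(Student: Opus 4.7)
The plan is to assemble the statement directly from the results established earlier in the excerpt, with one small continuity check at the end.

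First I would observe that $X$ is Tychonoff: since $X$ is zero-dimensional (in the usual sense of having a base of clopen sets) and $T_1$ (implicit in the homogeneity context and Ford's proposition), it is completely regular, so the zero-cozero topology on $\mathcal{H}(X)$ is available. By Corollary \ref{cor1}, $\mathcal{H}_{zc}(X)$ is a topological group, and by Proposition \ref{prop2} this topology is acceptable.

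Next I would invoke Ford's result (Proposition \ref{prop1}) to conclude that the homogeneous zero-dimensional space $X$ is SLH. Now $X$ and $\mathcal{H}_{zc}(X)$ satisfy the exact hypotheses of Proposition \ref{prop3}, so for any fixed $a \in X$ we obtain a canonical homeomorphism $X \cong \mathcal{H}(X)/G_a$, where $G_a = \{g \in \mathcal{H}(X) : g(a) = a\}$ is the stabilizer of $a$.

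What remains is to verify that $G_a$ is a \emph{closed} subgroup of $\mathcal{H}_{zc}(X)$. This is the only nontrivial bookkeeping step, and it follows from the fact that an acceptable topology makes the evaluation continuous in the first variable: the map $\gamma_a : \mathcal{H}(X) \to X$ defined by $\gamma_a(g) = g(a)$ is continuous, and since $X$ is Hausdorff the singleton $\{a\}$ is closed, so $G_a = \gamma_a^{-1}(\{a\})$ is closed. The subgroup property is immediate from $(gh)(a) = g(h(a))$ and $g^{-1}(a) = a$ whenever $g(a) = a$.

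I do not anticipate any real obstacle; the main conceptual input has already been done in the preceding sections (the construction of the zero-cozero topology, its identification with a compact-open topology on $\beta X$, and its status as an acceptable group topology), and the rest is a one-line application of Ford's theorem together with Proposition \ref{prop3}. The only place where one must be slightly careful is the closedness of $G_a$, which I would explicitly note since the statement of the theorem demands a \emph{closed} subgroup, whereas Proposition \ref{prop3} by itself only produces the quotient structure.
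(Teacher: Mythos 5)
Your proposal is correct and follows essentially the same route as the paper: Ford's proposition gives SLH, Proposition \ref{prop2} gives acceptability of the zero-cozero topology, and Proposition \ref{prop3} yields the quotient representation. The extra details you supply (the Tychonoff check and the explicit verification that the stabilizer $G_a$ is closed via continuity of evaluation in the first variable) are sound and merely make explicit what the paper leaves implicit.
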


\begin{proof}
By Proposition \ref{prop1}, the space $X$ is SLH. By Proposition
\ref{prop2}, the zero-cozero topology on the group
${\mathcal{H}}(X)$ is acceptable. Now it follows from Proposition
\ref{prop3} that $X$ is homeomorphic to a quotient space of the
group ${\mathcal{H}_{zc}}(X)$ with respect to a closed subgroup.
\end{proof}

\begin {thebibliography}{00}

\bibitem{osi4}
Alqurashi, W.K., Khan, L.A. and Osipov A.V., \textit{Set-open
topologies on function spaces}, Applied General Topology, {\bf
19}:1, (2018), 55--64.

\bibitem{1}
Arens, R.  \textit{Topologies for
 homeomorphism groups},  Amer. J. Math. {\bf 68}, (1946), 593-610.

\bibitem{21}
Arens, R. and Dugundji, J. \textit{Topologies for function
spaces}, Pacific J. Math. {\bf 1}, (1951), 5--31.

\bibitem{artk}
Arhangel'skii, A.V. and Tkachenko, M.G. \textit{Topological groups
and related structures}, Atlantis Studies in Mathematics, Vol. I,
Atlantis Press and World Scientific, Paris-Amsterdam, 2008.

\bibitem{22}
Di Concilio, A. and Naimpally, S. \textit{Proximal set-open
topologies}, Bollettino dell'Unione Mathematica Italiana, Serie 8,
Vol. {\bf 3-B}:1, (2000), 173--191.

\bibitem{26}
Di Concilio, A. \textit{Topologizing homeomorphism groups of
rim-compact spaces}, Topology and its Applications, {\bf 153},
(2006), 1867--1885.

\bibitem{adc}
Di Concilio, A. \textit{Action, uniformity and proximity},
Quaderno di Mathematica, {\bf 22} SUN (2009), 71--88.

\bibitem{2}
Di Concilio, A. \textit{Action on hyperspace}, Top.Proc. {\bf 41},
(2013), 85--98.

\bibitem{25}
Di Concilio, A. \textit{Topologizing homeomorphism groups},
Journal of function spaces and applications, Vol. 2013, Article ID
894108, 14 p., http://dx.doi.org/10.1155/2013/894108.

\bibitem{3}
Dijkstra, J.J. \textit{On homeomorphism groups and the
compact-open topology}, Amer. Math. Monthly {\bf 112}, (2005),
910--912.

\bibitem{for}
Ford, L.R. \textit{Homeomorphism groups and coset spaces}, Trans.
Amer. Math. Soc. {\bf 77}, (1954), 490--497.

\bibitem{4}
Gauld, D. and van Mill J. \textit{Homeomorphism groups and
metrisation of manifolds}, N.Z.J. Math. {\bf 42}, (2012), 37--43.

\bibitem{hu}
Hu, S.T. \textit{Introduction to general topology}, Holden-Day
Series in Mathematics (1966).

\bibitem{le}
Leader, S. \textit{Local proximity spaces}, Math. Ann. {\bf 169},
(1967), 275--281.

\bibitem{osi3}
Osipov, A.V. \textit{The Menger and projective Menger properties
of function spaces with the set-open topology}, Mathematica
Slovaca, {\bf 63}:3, (2019), 699--706.

\bibitem{osi5}
Osipov, A.V. \textit{Group structures of a function spaces with
the set-open topology}, Siberian Electronic Mathematical Reports,
{\bf 14}, (2017), 1440--1446.

\bibitem{osi6}
Osipov, A.V. \textit{Topological-algebraic properties of function
spaces with set-open topologies}, Topology and its Applications,
{\bf 159}:3, (2012), 800--805.

\bibitem{osi7}
Osipov, A.V. \textit{The C-compact-open topology on function
spaces}, Topology and its Applications, {\bf 159}:13, (2012),
3059--3066.

\bibitem{osi1}
Osipov, A.V. \textit{The set-open topology}, Top. Proc. {\bf 37},
(2011), 205--217.

\bibitem{por}
Porter, K.F. \textit{The regular open-open topology for function
spaces}, Internat. J. Math. and Math. Sci. {\bf 19}:2, (1996),
299--302.

\bibitem{osi2}
Nokhrin, S.E. and Osipov, A.V. \textit{On the coincidence of
set-open and uniform topologies}, Proc. Steklov Inst. Math. {\bf
267}:1, (2009), 184--191.

\end{thebibliography}

\end{document}